 \newcommand{\beq}{\begin{equation}}
\newcommand{\eeq}{\end{equation}}
\newtheorem{theorem}{Theorem}[section]
\newtheorem{proposition}[theorem]{Proposition}
\newtheorem{lemma}[theorem]{Lemma}
\newtheorem{corollary}[theorem]{Corollary}
\newtheorem{example}[theorem]{Example}
\newenvironment{proof}{\medbreak\noindent{\it Proof.}\rm}{\hfill$\square$\rm}
\newcommand{\vph}{\varphi}
\newcommand{\R}{{ \mathbb R}}
\newcommand{\Rn}{{ \mathbb R}^n}
\newcommand{\C}{{\mathbb  C}}
\newcommand{\Cn}{{\mathbb  C}^n}
\newcommand{\D}{{\mathbb D}}
\newcommand{\BE}{{\mathbf E}}
\newcommand{\F}{{\mathcal F}}
\newcommand{\E}{{\mathcal E}}
\newcommand{\M}{{\mathcal M}}
\newcommand{\PSH}{{\operatorname{PSH}}}
\newcommand{\Exp}{{\operatorname{Exp}\,}}
\newcommand{\Capa}{{\operatorname{Cap}\,}}
\begin{document}

% APM setting
%\baselineskip=17pt

%\vskip1cm
\begin{center}
{\Large\bf Local geodesics for plurisubharmonic functions}
\end{center}

\begin{center}
{\large Alexander Rashkovskii}
\end{center}

\vskip1cm

\begin{abstract}
We study geodesics for plurisubharmonic functions from the Cegrell class ${\mathcal F}_1$ on a bounded hyperconvex domain of ${\mathbb  C}^n$ and show that, as in the case of metrics on K\"{a}hler compact menifolds, they linearize an energy functional. As a consequence, we get a uniqueness theorem for functions from ${\mathcal F}_1$ in terms of total masses of certain mixed Monge-Amp\`ere currents. Geodesics of relative extremal functions are considered and a reverse Brunn-Minkowski inequality  is proved for capacities of multiplicative combinations of multi-circled compact sets. We also show that functions with strong singularities generally cannot be connected by (sub)geodesic arks.
\end{abstract}

\section{Introduction}
Starting with pioneer work by Mabuchi \cite{Mab}, a notion of geodesics in the space of K\"{a}hler metrics on compact complex manifolds has been playing a prominent role in K\"{a}hler geometry and has found a lot of applications. We will not give here any detailed account on this subject;  the interested reader can consult, for example, \cite{Sem}, \cite{Chen}, \cite{Do}, \cite{BmBo1}, \cite{Be1}, \cite{G12}, and the bibliography therein.
In particular, geodesics in the space of metrics on a compact $n$-dimensional K\"{a}hler manifold $(X,\omega)$ have been characterized as solutions to a complex homogeneous equation, which implies linearity of the Mabuchi functional
\beq\label{Mab0} \M(\psi,\phi_0)=\frac1{n+1}\int_X (\psi-\phi_0)\sum_{k=0}^n (dd^c \psi)^k\wedge(dd^c \phi_0)^{n-k}\eeq
along the geodesics $\psi=\psi_t$ (here $\phi_0$ is a reference metric).

We believe however that a local, flat situation of functions on a bounded pseudoconvex domain $D$ of $\Cn$ deserves independent consideration, at least because of possible applications. The simplest choice here are functions with zero boundary values on $\partial D$ and finite total Monge-Amp\`ere mass. To provide existence of the corresponding boundary problem on $D\times \{1<|\zeta|<e\}$, we require also finiteness of the Monge-Amp\`ere energy $\BE(u)=\int_D u(dd^cu)^n$. For such (not necessarily bounded) plurisubharmonic functions we show in Theorem~\ref{mabE1} that the energy functional $u\mapsto\BE(u)$
plays role of the Mabuchi functional (\ref{Mab0}). We use this in proving a uniqueness result (Theorem~\ref{uniq} and Corollary~\ref{uniq1}) for  functions from the Cegrell class $\F_1(D)$ in terms of total masses of $n+1$ mixed Monge-Amp\`ere currents on $D$.

We discuss briefly geodesics connecting relative extremal functions $\omega_{K_j}$ of compact subsets $K_j$ of $D$. In the multi-circled case,
a variant of reversed Brunn-Minkowski inequality is proved for the Monge-Amp\`ere capacities of multiplicative combinations of $K_j$. We
present a simple example where the geodesic functions $u_t$ are still relative extremal functions, however not of compact sets but of multi-plate condensers.

The case of bounded functions (Theorem~\ref{geodrel}) is close to the classical setting of K\"{a}hler metrics, with a modification to handle the boundary effects. The general case requires a justification for existence of solutions of the corresponding boundary problem like that in \cite{BBGZ13} and \cite{Da14a}. We show that while this works for $\F_1(D)$ (Theorem~\ref{mabE1}), for functions with strong singularities (say, with positive Lelong numbers) such a problem generally has no solution (Theorem~\ref{minenv}).

\medskip

\section{Energy functional on Cegrell classes}

Let $D\subset \Cn$ be a bounded hyperconvex domain. We recall that Cegrell's class $\E_0(D)$ consists of bounded plurisubharmonic functions $u$ in $D$ with zero boundary values on $\partial D$ and finite total Monge-Amp\`ere mass
$$\int_D(dd^cu)^n<\infty;$$
class $\E_1(D)$ consists of functions $u$ that are limits of decreasing sequences $u_j\in\E_0(D)$ such that
$$\sup_j \int_D |u_j| (dd^cu_j)^n<\infty;$$
if, in addition,
$$\sup_j \int_D (dd^cu_j)^n<\infty,$$
then $u\in\F_1(D)$.

If $u\in\E_1(D)$, then the current $(dd^cu)^n$ is defined as the limit of $(dd^cu_j)^n$ and is independent of the choice of the approximating sequence $u_j$ \cite[Thm. 3.8]{Ce03}.

For any function $u\in\E_1(D)$, consider its energy functional
\beq\label{Mab1} \BE(u)=(n+1)\,\M(u,0)=\int_D u(dd^c u)^n.\eeq
For any sequence $u_j$ from the definition of $\E_1(D)$, we have
$\BE(u_j)\to \BE(u)$ \cite[Thm. 3.8]{Ce03}.

Similarity with the Mabuchi functional (\ref{Mab0}) for metrics on compact manifolds becomes visible from the following important identity.

\begin{proposition}\label{lem:Mab} For any $u,v\in\E_1(D)$,
\beq\label{Mab} \BE(u)-\BE(v)=\int_D (u-v)\sum_{k=0}^n (dd^c u)^k\wedge(dd^c v)^{n-k}.\eeq
\end{proposition}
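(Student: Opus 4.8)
The plan is to reduce the identity to a single integration-by-parts formula and then promote it from the bounded case to $\E_1(D)$ by approximation. First I would split the right-hand side of (\ref{Mab}) into its $u$- and $v$-parts, writing it as $\sum_{k=0}^n A_k-\sum_{k=0}^n B_k$, where
\begin{equation*}
A_k=\int_D u\,(dd^cu)^k\wedge(dd^cv)^{n-k},\qquad B_k=\int_D v\,(dd^cu)^k\wedge(dd^cv)^{n-k}.
\end{equation*}
The point is that the sums telescope once one knows
\begin{equation*}
A_k=B_{k+1},\qquad 0\le k\le n-1. \tag{$*$}
\end{equation*}
Indeed, granting $(*)$, the terms $A_0,\dots,A_{n-1}$ cancel against $B_1,\dots,B_n$, leaving only $A_n-B_0=\int_D u\,(dd^cu)^n-\int_D v\,(dd^cv)^n=\BE(u)-\BE(v)$, which is exactly (\ref{Mab}).

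Next I would establish $(*)$ when $u,v\in\E_0(D)$. Here both functions are bounded with vanishing boundary values on $\partial D$, so writing $(dd^cv)^{n-k}=dd^cv\wedge(dd^cv)^{n-k-1}$ and $(dd^cu)^{k+1}=dd^cu\wedge(dd^cu)^k$ and setting $S=(dd^cu)^k\wedge(dd^cv)^{n-k-1}$, a closed positive current of bidegree $(n-1,n-1)$, both sides of $(*)$ become
\begin{equation*}
A_k=\int_D u\,dd^cv\wedge S,\qquad B_{k+1}=\int_D v\,dd^cu\wedge S.
\end{equation*}
Their equality is the classical Bedford--Taylor integration-by-parts formula $\int_D u\,dd^cv\wedge S=\int_D v\,dd^cu\wedge S$, in which the boundary contribution vanishes precisely because $u$ and $v$ are zero on $\partial D$. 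This proves $(*)$, hence (\ref{Mab}), for all $u,v\in\E_0(D)$.

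Finally I would pass to general $u,v\in\E_1(D)$ by taking decreasing sequences $u_j,v_j\in\E_0(D)$ with $u_j\downarrow u$ and $v_j\downarrow v$ as in the definition of $\E_1(D)$. For each $j$ the identity (\ref{Mab}) holds, and I would let $j\to\infty$. The two energy terms converge by the cited continuity $\BE(u_j)\to\BE(u)$ and $\BE(v_j)\to\BE(v)$ \cite[Thm. 3.8]{Ce03}, so the remaining task is to pass to the limit in each mixed integral $\int_D (u_j-v_j)(dd^cu_j)^k\wedge(dd^cv_j)^{n-k}$.

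The main obstacle is exactly this last step: one must justify convergence of the mixed Monge--Amp\`ere integrals for unbounded $\E_1$ functions, where neither the potentials nor the currents are a priori controlled pointwise. I would handle it by combining the weak convergence of the wedge products $(dd^cu_j)^k\wedge(dd^cv_j)^{n-k}$ under the simultaneous decreasing approximation with the uniform energy bounds built into the definition of $\E_1(D)$; these bounds are what guarantee that the potentials $u_j-v_j$ remain uniformly integrable against the mixed currents, so that no mass escapes in the limit. Once this equi-integrability is secured, taking limits term by term in the telescoped identity yields (\ref{Mab}) in full generality.
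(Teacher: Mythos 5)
Your telescoping skeleton is exactly the computation that underlies the paper's proof: writing the right-hand side as $\sum_k A_k-\sum_k B_k$ and showing $A_k=B_{k+1}$ via integration by parts with $T=(dd^cu)^k\wedge(dd^cv)^{n-k-1}$ is the intended argument. The difference is that the paper never needs your approximation step at all: it invokes Cegrell's integration-by-parts formula $\int_D u\,dd^cv\wedge T=\int_D v\,dd^cu\wedge T$, which is valid \emph{directly} for $u,v\in\E_1(D)$ and positive closed currents $T$ (\cite[Cor.~3.4]{Ce04}), so the identity $(*)$ holds at once in $\E_1$ and the proof is a one-liner.

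The genuine gap in your version is the final limiting step. Having proved $(*)$ only in $\E_0(D)$, you must show $\int_D(u_j-v_j)\,(dd^cu_j)^k\wedge(dd^cv_j)^{n-k}\to\int_D(u-v)\,(dd^cu)^k\wedge(dd^cv)^{n-k}$ along the decreasing approximations, and your justification --- weak convergence of the currents plus ``uniform integrability'' coming from the bounds in the definition of $\E_1(D)$ --- is an assertion, not an argument. Two concrete problems: (i) weak convergence of $(dd^cu_j)^k\wedge(dd^cv_j)^{n-k}$ together with monotone convergence of the (unbounded) potentials does \emph{not} by itself yield convergence of such integrals; controlling mass that concentrates where the potentials blow up is precisely the delicate point of Cegrell's theory, not a soft consequence of weak convergence. (ii) The definition of $\E_1(D)$ bounds only the pure energies $\int|u_j|(dd^cu_j)^n$ and $\int|v_j|(dd^cv_j)^n$; to dominate the mixed terms $\int|u_j-v_j|\,(dd^cu_j)^k\wedge(dd^cv_j)^{n-k}$ uniformly one needs Cegrell's H\"older-type energy estimates bounding mixed energies by products of pure energies, which is itself one of the main results of \cite{Ce03}. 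In other words, the ``equi-integrability'' you postulate is exactly the content of the convergence theorem the paper cites as \cite[Thm.~3.8]{Ce03} (stated there for decreasing multi-sequences and covering these mixed integrals). The gap is fillable --- either quote that convergence theorem explicitly at your last step, or bypass approximation altogether by quoting \cite[Cor.~3.4]{Ce04} as the paper does --- but as written the hardest step of the proof is assumed rather than proved.
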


\begin{proof} This easily follows from the integration by parts formula
\beq\label{ibp}\int_D u\, dd^c v\wedge T= \int_D v\, dd^c u\wedge T\eeq
valid for $u,v\in\E_1$ and positive closed currents $T$ \cite[Cor. 3.4]{Ce04}.
\end{proof}

\begin{corollary}\label{dompr} If $u,v\in\E_1(D)$ satisfy $u\le v$, then $\BE(u)\le \BE(v)$. If, in addition, $u\in\F_1(D)$ and $\BE(u)= \BE(v)$, then $u= v$ on $D$.
\end{corollary}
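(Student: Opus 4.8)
The plan is to read everything off the identity (\ref{Mab}) of Proposition~\ref{lem:Mab}. First I would record that each of the $n+1$ currents $(dd^c u)^k\wedge(dd^c v)^{n-k}$ is a positive measure on $D$, so the sum on the right of (\ref{Mab}) is a positive measure, say $\mu$. If $u\le v$, then $u-v\le 0$, and integrating a non-positive function against the positive measure $\mu$ gives $\BE(u)-\BE(v)=\int_D(u-v)\,d\mu\le 0$. This settles the monotonicity statement $\BE(u)\le\BE(v)$, and it uses nothing beyond $u,v\in\E_1(D)$.

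For the equality case I would exploit that, under $u\le v$, the right-hand side of (\ref{Mab}) is a sum of $n+1$ individually non-positive terms adding up to zero, so every term must vanish separately. In particular the top term gives $\int_D(u-v)(dd^c u)^n=0$. Since $u-v\le 0$ and $(dd^c u)^n\ge 0$, this forces $u=v$ almost everywhere with respect to $(dd^c u)^n$; equivalently, $(dd^c u)^n$ puts no mass on the set $\{u<v\}$.

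The remaining, and genuinely hard, step is to upgrade this $(dd^c u)^n$-a.e.\ equality to equality everywhere on $D$, and this is exactly where the stronger hypothesis $u\in\F_1(D)$ enters: it is the finiteness of the total Monge-Amp\`ere mass of $u$ that makes the relevant tools available. I would invoke a domination principle for the Cegrell classes, to the effect that for $u\in\F_1(D)$ with $u\le v$ and $(dd^c u)^n(\{u<v\})=0$ one has $u=v$ throughout $D$. The natural mechanism is the comparison principle, which controls $\int_{\{u<v\}}(dd^c v)^n$ by $\int_{\{u<v\}}(dd^c u)^n$; the latter is now zero, and an approximation argument comparing $u$ with $v-\varepsilon$ on the plurifine-open sets $\{u<v-\varepsilon\}$ then shows that $\{u<v\}$ carries no Monge-Amp\`ere mass for either function and must be empty. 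I expect the main obstacle to be the careful justification of this domination step, in particular checking that the finite-mass hypothesis on $u$ legitimately replaces a finite-mass assumption on $v$, rather than the elementary bookkeeping in (\ref{Mab}).
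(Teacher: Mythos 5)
Your first two steps are exactly the paper's: the monotonicity statement follows by integrating $u-v\le 0$ against the positive measure $\sum_k (dd^cu)^k\wedge(dd^cv)^{n-k}$ in (\ref{Mab}), and in the equality case each of the $n+1$ non-positive terms must vanish separately, giving $(dd^cu)^n=0$ on $A=\{u<v\}$. (Incidentally, the $k=0$ term already gives you $(dd^cv)^n(A)=0$ for free, so the detour through the comparison principle with constant shifts $v-\varepsilon$ is unnecessary for that intermediate fact.)

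The gap is in your final step: from ``$A$ carries no Monge--Amp\`ere mass for either $u$ or $v$'' you conclude that $A$ ``must be empty,'' but this inference is not valid and no amount of comparison with constant translates $v-\varepsilon$ can make it valid. Comparison with constant shifts only ever converts statements about Monge--Amp\`ere masses into other statements about Monge--Amp\`ere masses; it never produces a contradiction with mere non-emptiness, because Monge--Amp\`ere measures of particular functions can perfectly well vanish on large (e.g.\ positive-capacity) sets --- that is what maximality means. What you are asserting at this point \emph{is} the domination principle, so the argument is circular as a proof of it. The missing idea, which is how the paper (following \cite{Ce}, where it is done for bounded functions) closes the argument, is to inject strict positivity: take $P(z)=|z|^2-C\in\PSH^-(D)$, note that if $u(z_0)<v(z_0)$ then the set $A_\eta=\{u<\eta P+v\}$ has positive Lebesgue measure for some $\eta>0$ and is contained in $A$ (since $P\le0$), and then apply the comparison principle \cite[Lemma~4.4]{Ce03} (this is where $u\in\F_1(D)$ is used) to get
$$\eta^n\int_{A_\eta}(dd^cP)^n\le\int_{A_\eta}(dd^c(\eta P+v))^n\le\int_{A_\eta}(dd^cu)^n\le\int_{A}(dd^cu)^n=0.$$
Since $(dd^cP)^n$ is a positive multiple of Lebesgue measure, this contradicts $|A_\eta|>0$. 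It is precisely the perturbation by a function with strictly positive Monge--Amp\`ere density that converts ``zero Monge--Amp\`ere mass'' into ``zero Lebesgue measure,'' and that conversion is what your sketch lacks.
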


\begin{proof} The inequality is well known (see, for example, \cite[Thm. 3.8]{Ce03}) and follows, in particular, directly from Proposition~\ref{lem:Mab}.

 The condition $\BE(u)=\BE(v)$ gives us, by (\ref{Mab}), $(dd^c u)^n=0$ on the set $A=\{z:\: u(z)<v(z)\}$. We claim that this implies $u=v$ everywhere in $D$. In \cite{Ce}, this was proved for locally bounded $u$ and $v$; we adapt the proof to our case.
Let $P(z)=|z|^2-C\in\PSH^-(D)$.
 If $u(z_0)<v(z_0)$, then the set $A_\eta=\{z:\: u(z)<\eta P(z)+v(z)\}$ has positive Lebesgue measure for some $\eta>0$.

 By \cite[Lemma~4.4]{Ce03},
 $$ \eta^n\int_{A_\eta}(dd^cP)^n \le \int_{A_\eta}(dd^c(\eta P+v)^n)\le \int_{A_\eta}(dd^c u)^n \le \int_{\{u<v\}} (dd^cu)^n=0,$$
which contradicts the positivity of the Lebesgue measure of $A_\eta$.
\end{proof}

\medskip
\noindent{\bf Remark.} The second statement of Corollary~\ref{dompr} remains true if the condition $u\in\F_1(D)$ is replaced by $u\in\E_1(D)$ and $\lim u(z)=0$ as $z\to\partial D$. In this case (increasing, if needed, the constant $C$ in the definition of the function $P$), the set $A_\eta$ is compactly supported in $D$ and thus both $u$ and $v$ have finite Monge-Amp\`ere mass on a neighborhood of $\overline A_\eta$, so \cite[Lemma~4.4]{Ce03} still can be applied.

\section{Geodesics for the class $\E_0$}

Let $S$ be the annulus $\{\zeta\in\C:\: 1<|\zeta|<e\}$ bounded by the circles $S_0=\{|\zeta|=1\}$ and $S_1=\{|\zeta|=e\}$.
Given two functions $u_0,u_1\in\E_0(D)$, consider the class $W(u_1,u_2)$ of all functions $u\in\PSH^-(D\times S)$ such that $\limsup\, u(z,\zeta)\le u_j(z)$ as ${\zeta\to S_j}$.  The class is not empty because, for example, is contains $u_0+u_1$.

Denote $$\widehat u(z,\zeta)=\sup\{u(z,\zeta):\: u\in W(u_1,u_2)\}.$$
Since its u.s.c.  regularization ${\widehat u}^*$ belongs to $W(u_1,u_2)$, we have $\widehat u={\widehat u}^*$. Moreover, being a maximal plurisubharmonic function, it satisfies the homogeneous Monge-Amp\`ere equation
\beq\label{relextMA}(dd^c \widehat u)^{n+1}=0\ {\rm on\ }D\times S.\eeq

Evidently, $\widehat u(z,\zeta)=\widehat u(z,|\zeta|)$ on $D\times S$, so the function $u_t(z):=\widehat u(z,e^t)$ is convex in $t\in(0,1)$; we will call it the {\it geodesic} of $u_0$ and $u_1$. Similar to \cite{Be1}, we get

 \begin{proposition}\label{relextbounds} The geodesic $u_t$ of $u_0,u_1\in\E_0(D)$ has the following properties:
\begin{enumerate}
\item[(i)]  $u_t(z)\to 0$ as $z\to\partial D$;
\item[(ii)] $u_t\to u_j$ as $t\to j$, uniformly on $D$ ($j=0,1$);
\item[(iii)]  $ u_t\le U_t:=(1-t)u_0+tu_1$;
\item[(iv)] $u_t\ge s_t:=\max\{u_0-M_1\,t,u_1-M_0\,(1-t)\}$, where $M_j=\|u_j\|_\infty$.
\end{enumerate}
\end{proposition}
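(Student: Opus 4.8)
The plan is to prove the four assertions of Proposition~\ref{relextbounds} not in the listed order, but by first establishing the two sandwiching estimates (iii) and (iv) from explicit competitors, then deducing (ii) by squeezing, and finally handling (i) by a separate barrier argument. For (iv) I would write down the explicit candidate
$$ s(z,\zeta)=\max\{u_0(z)-M_1\log|\zeta|,\ u_1(z)-M_0(1-\log|\zeta|)\}. $$
Each entry is the sum of a function plurisubharmonic in $z$ and a function pluriharmonic in $\zeta$ (namely $\mp M_j\log|\zeta|$), so each entry, and hence their maximum, is plurisubharmonic on $D\times S$; on $S$ one has $\log|\zeta|\in(0,1)$, whence $s\le 0$. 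Setting $t=\log|\zeta|$ and using $u_1-M_0\le -M_0\le u_0$ and $u_0-M_1\le -M_1\le u_1$, one checks $\limsup_{\zeta\to S_j}s=u_j$, so $s\in W(u_0,u_1)$. As $\widehat u$ is the upper envelope of $W(u_0,u_1)$, we get $u_t=\widehat u(\cdot,e^t)\ge s(\cdot,e^t)=s_t$, which is (iv).

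For (iii) I would use that $\widehat u=\widehat u^{*}\in W(u_0,u_1)$, so by the defining property of the class $\limsup_{t\to 0^{+}}u_t(z)\le u_0(z)$ and $\limsup_{t\to 1^{-}}u_t(z)\le u_1(z)$ for each fixed $z$. Since $t\mapsto u_t(z)$ is convex on $(0,1)$ and bounded (below by $s_t$, above by $0$), its one-sided endpoint limits exist and the chord estimate for convex functions yields $u_t(z)\le(1-t)\,u_0(z)+t\,u_1(z)=U_t(z)$, which is (iii). With (iii) and (iv) in hand, (ii) is immediate: both barriers converge to $u_0$ uniformly on $D$ as $t\to0$, since $\|U_t-u_0\|_\infty\le t\,\|u_1-u_0\|_\infty$ and, from $u_0-M_1t\le s_t\le u_0+M_0t$, also $\|s_t-u_0\|_\infty\le \max\{M_0,M_1\}\,t$; symmetrically at $t\to1$. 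Squeezing $s_t\le u_t\le U_t$ then gives the uniform convergence $u_t\to u_j$.

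The main obstacle is (i), because the lower barrier $s_t$ does \emph{not} vanish at $\partial D$: as $z\to\partial D$ one has $s_t(z)\to-\min\{M_1t,M_0(1-t)\}<0$ for interior $t$, while the upper bound only gives $\limsup_{z\to\partial D}u_t\le 0$ via $U_t\to0$. To obtain $\liminf_{z\to\partial D}u_t\ge 0$ I would invoke the hyperconvexity of $D$ to produce, for each $\xi\in\partial D$ and each $\epsilon>0$, a localized plurisubharmonic barrier $h\le 0$ on $D$ with $h(z)\to 0$ as $z\to\xi$ and $h\le -1$ outside a small neighborhood $V$ of $\xi$. Since $u_0,u_1\to 0$ at $\partial D$, shrinking $V$ gives $u_j>-\epsilon$ on $V\cap D$; then for $N\ge\max\{M_0,M_1\}$ the $\zeta$-independent function $w(z,\zeta)=-\epsilon+N\,h(z)$ lies in $W(u_0,u_1)$, since it is plurisubharmonic and $\le0$, and its trace satisfies $-\epsilon+Nh\le u_j$ (on $V\cap D$ because $Nh\le0$ forces $-\epsilon+Nh\le-\epsilon<u_j$, and off $V$ because $-\epsilon+Nh\le -N\le -M_j\le u_j$). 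Hence $u_t\ge w$, so $\liminf_{z\to\xi}u_t(z)\ge-\epsilon$, and letting $\epsilon\to0$ and varying $\xi$ yields (i). The delicate point is precisely the existence of this local barrier $h$, which is a standard consequence of the hyperconvexity of $D$ (equivalently, of the product $D\times S$) and is the place where the classical compact-manifold argument must be modified to handle the boundary, as the paper indicates.
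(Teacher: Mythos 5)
Your proofs of (iv), (iii) and (ii) are correct and essentially identical to the paper's: the explicit competitor $\widehat s(z,\zeta)=s_{\log|\zeta|}(z)\in W(u_0,u_1)$ gives (iv), convexity of $t\mapsto u_t(z)$ together with the endpoint bounds $\limsup_{t\to j}u_t\le u_j$ gives (iii), and squeezing gives (ii). The problem is (i). Your barrier claim is false: for a general bounded hyperconvex domain there is \emph{no} function $h\in\PSH^-(D)$ with $h(z)\to 0$ as $z\to\xi$ and $h\le -1$ outside a small neighborhood $V$ of $\xi$. Take $D=\D^2$ (hyperconvex) and $\xi=(1,0)$, $V\subset B(\xi,1/2)$. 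Restricting $h$ to the analytic discs $\lambda\mapsto(r,\lambda)$ and using the submean value inequality on the circle $|\lambda|=1/2$, whose image lies at distance $\ge 1/2$ from $\xi$, gives
$$ h(r,0)\ \le\ \frac{1}{2\pi}\int_0^{2\pi}h\left(r,\tfrac12 e^{i\theta}\right)d\theta\ \le\ -1 \qquad\text{for all } r<1,$$
so $h$ cannot tend to $0$ along $(r,0)\to\xi$. Existence of such peak-type barriers at every boundary point is Sibony's B-regularity, which is strictly stronger than hyperconvexity (the polydisc is the standard counterexample, its boundary containing analytic discs). So your proof of (i) has a genuine gap, and no argument based on local peak functions can close it.

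The repair is much simpler and is exactly what the paper does: the $\zeta$-independent function $u_0(z)+u_1(z)$ is plurisubharmonic and negative on $D\times S$ and satisfies $\limsup_{\zeta\to S_j}(u_0+u_1)(z)=u_0(z)+u_1(z)\le u_j(z)$, hence $u_0+u_1\in W(u_0,u_1)$. Therefore $u_0+u_1\le u_t\le 0$ for every $t$, and since $u_0,u_1\in\E_0(D)$ have zero boundary values, $u_t(z)\to 0$ as $z\to\partial D$. This global competitor replaces your pointwise barriers entirely; with it your remaining arguments for (iii), (iv) and (ii) stand as written.
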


\begin{proof} Since $u_t\ge u_0+u_1$, we have (i).
Relation (iii) follows because $U_0=u_0$, $U_1=u_1$ and $U_t$ is harmonic in $t$ (while $u_t$ is convex in $t$). The lower bound (iv) is evident because $\widehat s(z,\zeta):=s_{\log|\zeta|}(z)$ belongs to $W(u_0,u_1)$. Finally, (iii) and (iv) imply (ii).
\end{proof}

\medskip

A family of functions $v_t\in\E_0(D)$, $0<t<1$, will be called a {\it subgeodesic} for $u_0$ and $u_1$ if $\widehat v(z,\zeta):=v_{\log|\zeta|}(z)\in W(u_0,u_1)$.

\medskip

Let us study values of the energy functional $\BE$ on curves in $\E_0(D)$. Here again we get its properties as in the case of compact manifolds.

\medskip
\begin{proposition}\label{conc} The functional $v\mapsto \BE(v)$ is concave on $\E_0(D)$.
\end{proposition}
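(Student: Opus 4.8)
The plan is to establish concavity along ordinary affine segments. Fix $v_0,v_1\in\E_0(D)$, set $w=v_1-v_0$ and, for $s\in[0,1]$, let $v_s=(1-s)v_0+sv_1=v_0+sw$. Each $v_s$ is a bounded plurisubharmonic function with zero boundary values whose total Monge-Amp\`ere mass is finite (by the mixed-mass estimates for $\E_0$), so $v_s\in\E_0(D)$ and $\BE(v_s)$ is defined. Expanding the wedge power $(dd^c v_s)^n$ by multilinearity in $dd^c v_s=dd^c v_0+s\,dd^c w$ exhibits $f(s):=\BE(v_s)=\int_D v_s\,(dd^c v_s)^n$ as a polynomial in $s$ of degree at most $n+1$. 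In particular $f$ is smooth, so concavity is equivalent to the pointwise inequality $f''(s)\le 0$ on $(0,1)$, which is what I would prove.

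For the first variation I would differentiate termwise:
\beq\label{fv} f'(s)=\int_D w\,(dd^c v_s)^n+n\int_D v_s\,dd^c w\wedge(dd^c v_s)^{n-1}.\eeq
To handle the second integral, write $dd^c w=dd^c v_1-dd^c v_0$ and apply the integration by parts formula (\ref{ibp}) to each of the two resulting pieces, with $T=(dd^c v_s)^{n-1}$ a positive closed current and the relevant functions $v_s,v_0,v_1\in\E_0(D)\subset\E_1(D)$. By bilinearity this moves $dd^c$ from $w$ onto $v_s$, giving $\int_D v_s\,dd^c w\wedge(dd^c v_s)^{n-1}=\int_D w\,(dd^c v_s)^n$, whence $f'(s)=(n+1)\int_D w\,(dd^c v_s)^n$. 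Differentiating once more yields
\beq\label{sv} f''(s)=n(n+1)\int_D w\,dd^c w\wedge(dd^c v_s)^{n-1}.\eeq

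Finally I would integrate by parts in (\ref{sv}) to pass to the gradient form, using that $w$ is bounded with zero boundary values and that $T=(dd^c v_s)^{n-1}$ is positive and closed:
\beq\label{grad} \int_D w\,dd^c w\wedge(dd^c v_s)^{n-1}=-\int_D dw\wedge d^c w\wedge(dd^c v_s)^{n-1}\le 0,\eeq
since the integrand $dw\wedge d^c w\wedge(dd^c v_s)^{n-1}$ is a positive measure. Combining (\ref{sv}) and (\ref{grad}) gives $f''(s)\le0$, i.e. $\BE$ is concave along the segment, which is the assertion. The only real work lies in the two integration-by-parts steps: since $w$ is merely a difference of plurisubharmonic functions and $v_s$ need not be smooth, one must invoke the Cegrell integration by parts on $\E_1$ (formula (\ref{ibp})) after splitting $w$ into its plurisubharmonic pieces, and verify that all the boundary terms vanish and all the mixed masses involved are finite; these are exactly the points guaranteed by the hypotheses defining $\E_0(D)$.
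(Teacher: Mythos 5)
Your proof is correct and follows essentially the same route as the paper: restrict $\BE$ to the affine segment $U_s=(1-s)v_0+sv_1$, compute $f'(s)=(n+1)\int_D w\,(dd^c U_s)^n$ via Cegrell's integration by parts (the paper obtains this step from Proposition~\ref{lem:Mab}, which rests on the same formula (\ref{ibp})), and then show $f''(s)=-n(n+1)\int_D dw\wedge d^cw\wedge(dd^c U_s)^{n-1}\le 0$. The only cosmetic difference is that you justify differentiation by noting $f$ is a polynomial in $s$, which is a fine (and slightly more explicit) way to legitimize the paper's formal computation.
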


\begin{proof} Let
$U_t=(1-t)u_0+tu_1$, $0<t<1$. By Proposition~\ref{lem:Mab},
$$\frac{d}{dt}\, \BE(U_t)= (n+1)\int_D (u_1-u_0) (dd^c U_t)^n,$$
so
\begin{eqnarray*}
\frac1{n+1}\,\frac{d^2}{dt^2} \BE(U_t) &=& n\int_D (u_1-u_0)\wedge dd^c(u_1-u_0)\wedge (dd^c U_t)^{n-1}\\
 &=& - n\int_D d(u_1-u_0)\wedge d^c(u_1-u_0)\wedge (dd^c U_t)^{n-1}\le 0,
\end{eqnarray*}
which proves the claim.
\end{proof}

\medskip

It also turns out that, on the other hand, the function $\BE(v_t)$ is convex along subgeodesics.

\begin{theorem}\label{geodrel}
Let $v_t$ be a subgeodesic for $u_0,u_1\in \E_0(D)$. Then the function $t\mapsto \BE(v_t)$ is convex, and it is linear if and only if the subgeodesic $v_t$ is a geodesic.
\end{theorem}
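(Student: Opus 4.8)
The plan is to show that the distributional second derivative of the function $g(t):=\BE(v_t)$ equals, up to a positive constant, the push-forward to $(0,1)$ of the positive measure $(dd^c\widehat v)^{n+1}$ on $D\times S$ under the projection $p(z,\zeta)=\log|\zeta|$. Convexity will then be immediate from positivity of the Monge--Amp\`ere measure, and the equality case will follow from the fact that a nonnegative measure whose push-forward vanishes must itself vanish. First I would produce the two derivative formulas at a formal level, pretending $v_t$ is smooth in $(z,t)$. Differentiating the identity of Proposition~\ref{lem:Mab} in the form $\BE(v_{t+h})-\BE(v_t)=\int_D(v_{t+h}-v_t)\sum_{k}(dd^cv_{t+h})^k\wedge(dd^cv_t)^{n-k}$ gives
\beq g'(t)=(n+1)\int_D \dot v_t\,(dd^c v_t)^n,\eeq
and differentiating once more, after moving one $dd^c$ by the integration by parts formula (\ref{ibp}) (legitimate since every $v_t\in\E_0(D)$ vanishes on $\partial D$, so no boundary terms appear, exactly as in Proposition~\ref{conc}),
\beq \frac1{n+1}\,g''(t)=\int_D \ddot v_t\,(dd^c v_t)^n-n\int_D d\dot v_t\wedge d^c\dot v_t\wedge(dd^c v_t)^{n-1}.\eeq

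The heart of the argument is to recognize the right-hand side of the last display as a slice of the Monge--Amp\`ere mass of $\widehat v$. Writing $t=\log|\zeta|$, $\theta=\arg\zeta$ and using that $\widehat v(z,\zeta)=v_t(z)$ is independent of $\theta$, a direct expansion of the top-degree form $(dd^c\widehat v)^{n+1}$ on $D\times S$ (the only surviving contributions come from the $\zeta\bar\zeta$-Hessian entry together with $(dd^c_zv_t)^n$, and from one holomorphic-mixed together with one antiholomorphic-mixed differential) yields
\beq (dd^c\widehat v)^{n+1}=(n+1)\Big[\ddot v_t\,(dd^c v_t)^n-n\,d\dot v_t\wedge d^c\dot v_t\wedge(dd^c v_t)^{n-1}\Big]\wedge\tfrac12\,dt\wedge d\theta,\eeq
where $d$, $d^c$, $dd^c$ on the right act in the $z$-variables only. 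Integrating out $z$ and $\theta$ (the latter over a full period) identifies the push-forward as $p_*(dd^c\widehat v)^{n+1}=\pi\,g''(t)\,dt$ on $(0,1)$. Since $\widehat v\in\PSH^-(D\times S)$, the measure $(dd^c\widehat v)^{n+1}$ is nonnegative, hence so is its push-forward; therefore $g''\ge0$ in the sense of distributions and $g$ is convex. (This also explains the opposite sign in Proposition~\ref{conc}: for the harmonic path $U_t$ the associated $\widehat U$ fails to be plurisubharmonic, so the same form is $\le0$.)

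For the second assertion, $g$ is affine on $(0,1)$ if and only if $g''=0$ as a distribution, i.e. if and only if $p_*(dd^c\widehat v)^{n+1}=0$. As the push-forward of a nonnegative measure vanishes only when the measure itself vanishes, this is equivalent to $(dd^c\widehat v)^{n+1}=0$ on $D\times S$, that is, to $\widehat v$ being maximal and satisfying the homogeneous equation (\ref{relextMA}) --- precisely the condition that $v_t$ be a geodesic. In particular, if $v_t$ is the geodesic then $\widehat v=\widehat u$ is maximal and $g$ is affine, which gives the converse implication.

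The genuine difficulty is regularity: a subgeodesic $v_t$ and the associated $\widehat v$ are only plurisubharmonic and bounded, so neither the pointwise derivatives $\dot v_t,\ddot v_t$ nor the wedge products above exist classically, and the two differentiations are formal. The rigorous route is to work throughout with currents and distributions: one tests $p_*(dd^c\widehat v)^{n+1}$ against $\chi(\log|\zeta|)$ with $\chi\ge0$ and matches it with the distributional $\int g\,\chi''$, interpreting the mixed Monge--Amp\`ere products via Cegrell's theory and the integration by parts (\ref{ibp}) valid for $\E_1$-currents. The required continuity of $\BE$ and of the mixed currents under decreasing $\E_0$-approximations, together with the monotonicity recorded in Corollary~\ref{dompr}, then justifies the identity $p_*(dd^c\widehat v)^{n+1}=\pi\,g''\,dt$ and, with it, both the convexity and the characterization of the equality case. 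I expect this approximation step --- controlling the energy and the currents for merely bounded plurisubharmonic $\widehat v$ and passing the product formula through the limit --- to be the main obstacle.
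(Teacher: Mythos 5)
Your outline is in fact the paper's own: the identity you aim at --- that the push-forward $p_*(dd^c\widehat v)^{n+1}$ equals a positive constant times $g''(t)\,dt$ --- is exactly the fiber-integration formula $d_\zeta^{}d_\zeta^c\,\BE(\widehat v)=\int_D(dd^c\widehat v)^{n+1}$ that the paper establishes, and your formal expansion of $(dd^c\widehat v)^{n+1}$ agrees with its computation. The problem is that the two steps you treat as routine are precisely where the work lies, and as written both are gaps. The first concerns the integration by parts: you claim no boundary terms appear ``since every $v_t\in\E_0(D)$ vanishes on $\partial D$, exactly as in Proposition~\ref{conc}.'' That is not enough. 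The Stokes argument in the $z$-variables produces a boundary term involving the mixed derivative $d_\zeta\widehat v$ along $\partial D\times S$, and the vanishing of each $v_t$ on $\partial D$ gives no control of $d_\zeta\widehat v$ there; in Proposition~\ref{conc} the issue does not arise because the path is affine and Cegrell's formula (\ref{ibp}) applies directly in $z$ for each fixed $t$, whereas here the mixed $z$--$\zeta$ currents are not covered by (\ref{ibp}). The paper spends most of its proof on exactly this point: it replaces $\widehat v$ by smooth functions in $\E_0(D\times S)$ decreasing to it (Cegrell's approximation theorem, with convergence of the energies), and then uses Guan's $\max_b$ gluing with a smooth exhaustion $\rho/\epsilon$ of $D$ to make $\widehat v$ independent of $\zeta$ near $\partial D\times S$; only after that does Stokes' theorem apply without boundary terms. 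Your proposed distributional route (testing against $\chi(\log|\zeta|)$ and ``interpreting via Cegrell's theory'') never addresses this boundary issue, and it is the main obstacle you yourself flag but do not resolve.

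The second gap is in the equality case. You assert that $(dd^c\widehat v)^{n+1}=0$ is ``precisely the condition that $v_t$ be a geodesic.'' It is not: for a bounded plurisubharmonic function, vanishing of the Monge-Amp\`ere measure is maximality, whereas being a geodesic means being the upper envelope of the class $W$, i.e.\ maximality together with attainment of the correct boundary data. A subgeodesic can satisfy the homogeneous equation without being the geodesic of $u_0,u_1$: the geodesic of any pair $w_j\le u_j$ with $w_j\ne u_j$ lies in $W(u_0,u_1)$ and has vanishing Monge-Amp\`ere measure. So at best maximality should identify $v_t$ with the geodesic of its own endpoint limits $v_j=\lim_{t\to j}v_t$, and even that implication requires a comparison argument which is delicate here because the convergence $v_t\to v_j$ is not known to be uniform, so boundary values need not be attained in the sense the envelope construction requires. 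The paper closes this step by a different and cleaner device, which you will need (or a substitute for): let $w_t$ be the geodesic of $v_0,v_1$; then $v_t\le w_t$, both $\BE(v_t)$ and $\BE(w_t)$ are linear in $t$ with the same endpoint values, hence $\BE(v_t)=\BE(w_t)$ for all $t$, and the equality part of Corollary~\ref{dompr} forces $v_t=w_t$.
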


\begin{proof} The idea of the proof is similar to that for Proposition~\ref{conc}, however it needs more technicalities.

Convexity of $\BE(v_t)$ is equivalent to subharmonicity of the function
$$\widehat \BE= \BE(\widehat v)=\int_D \widehat v(d_z^{}d_z^c \widehat v)^n,$$ and the linearity of $\BE$ corresponds to the harmonicity of $\widehat \BE$.
The corresponding result for the Mabuchi functional (\ref{Mab0}) on a compact manifold $X$ follows from the formula
\beq\label{BmBo} d_\zeta^{}d_\zeta^c \widehat \E =\int_X (dd^c \widehat v)^{n+1}
\eeq
(see, for example, \cite{BmBo1}), and one gets then the claims from the plurisubharmonicity of the subgeodesics and equation (\ref{relextMA}).

In the case of functions from $\E_0(D)$, $D\subset\Cn$, one can argue as follows.
By \cite[Thm. 1.2]{Ce09}, $\widehat v$ is the limit of a decreasing sequence of smooth functions $\widehat v^{(j)}$ from $\E_0(D\times S)$; clearly, they can be assumed to be independent of the argument of $\zeta$. Furthermore, since $v_t^{(j)}\in \E_0(D)$ decrease to $v_t\in\E_0(D)$, we have $\BE(v_t^{(j)})\to \BE(v_t)$ by \cite[Thm. 3.8]{Ce03}. So, we can assume $\widehat v\in \E_0(D\times S)\cap C^{\infty}(D\times S)$.

Note that the aforementioned approximation theorem rests on the following result from \cite{Gu}, see also \cite[Lem. 2.2]{Ce09}:
{\sl If $\vph,\psi\in\PSH(\Omega)$ and $b:\,\R\to\R_+$ is a smooth convex function with $b(x)=|x|$ for all $|x|>\epsilon>0$, then $\max_b(\vph,\psi):=\vph+\psi+b(\vph-\psi)\in\PSH(\Omega)$.}

If we take here $\Omega=D\times S$, $\vph=\widehat v -2\epsilon$, and $\psi=\rho/\epsilon$ for a smooth exhaustion function $\rho$ of $D$ (which exists by \cite[Cor. 1.3]{Ce09}), then $\max_b(\vph,\psi)\in \E_0(D\times S)\cap C^{\infty}(D\times S)$. Moreover, it coincides with $\rho/\epsilon$ near $\partial D\times S$, so it is independent of $\zeta$ there. Since $\max_b(\vph,\psi)\to \widehat v$ uniformly as $\epsilon\to 0$, we can thus also assume $d_\zeta\widehat v=0$ near $\partial D$.

\medskip
By Proposition~\ref{lem:Mab},
$$ d_\zeta^c \widehat \BE=(n+1)\int_D d_\zeta^c \widehat v\wedge (d_z^{}d_z^c\widehat v)^n,$$
so
\begin{eqnarray*}
 \frac1{n+1}\, d_\zeta^{} d_\zeta^c \widehat \BE &=& \int_D d_\zeta^{} d_\zeta^c \widehat v\wedge (d_z^{}d_z^c\widehat v)^n
+ n\int_D d_\zeta^c \widehat v\wedge d_\zeta^{}(d_z^{}d_z^c\widehat v)\wedge(d_z^{}d_z^c\widehat v)^{n-1} \\
&=& \int_D d_\zeta^{} d_\zeta^c \widehat v\wedge (d_z^{}d_z^c\widehat v)^n
- n\int_D d_z^{}d_\zeta^c \widehat v\wedge d_z^cd_\zeta^{}\widehat v\wedge(d_z^{}d_z^c\widehat v)^{n-1}\\
&=&  \frac1{n+1}\,\int_D (dd^c\widehat v)^{n+1},
\end{eqnarray*}
where the second equality follows from Stokes' theorem because $d_\zeta\widehat v=0$ near $\partial D$, and the last one by direct calculation with $d=d_z^{}+d_\zeta^{}$, $d^c=d_z^c+d_\zeta^c$.

Finally, let $v_j=\lim v_t$ as $t\to j$ for $j=0,1$, and let $w_t$ be the geodesic of $v_0, v_1$. If $\BE(v_t)$ is linear, then $\BE(v_t)=\BE(w_t)$, so $v_t=w_t$ for all $t$ by Corollary~\ref{dompr}.
\end{proof}

\medskip
Now we can prove the following uniqueness result.

\begin{theorem}\label{uniq}
Let $u_0,u_1\in\E_0(D)$ satisfy
\beq\label{balance} \int_D u_0 (dd^c u_0)^k\wedge (dd^c u_1)^{n-k} =\BE(u_1),\quad k=0,\ldots,n.\eeq
Then $u_0= u_1$ in $D$.
\end{theorem}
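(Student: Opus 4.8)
The plan is to convert the $n+1$ scalar hypotheses (\ref{balance}) into the statement that the Dirichlet-type energy of $\phi:=u_1-u_0$ against every mixed Monge-Amp\`ere current vanishes, and then to argue that this rigidity forces $\phi\equiv 0$.

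First I would record the consequences of (\ref{balance}). Taking $k=n$ gives at once $\BE(u_0)=\int_D u_0(dd^cu_0)^n=\BE(u_1)$. Next, by the integration by parts formula (\ref{ibp}) applied with the positive closed current $T=(dd^cu_0)^{k-1}\wedge(dd^cu_1)^{n-k}$, for $k\ge 1$
\[
\int_D u_1(dd^cu_0)^k\wedge(dd^cu_1)^{n-k}=\int_D u_0(dd^cu_0)^{k-1}\wedge(dd^cu_1)^{n-k+1}=\BE(u_1),
\]
the last equality being (\ref{balance}) at index $k-1$; for $k=0$ the left-hand side equals $\BE(u_1)$ by definition (\ref{Mab1}). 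Subtracting these from (\ref{balance}) yields the symmetric family
\[
\int_D(u_0-u_1)(dd^cu_0)^k\wedge(dd^cu_1)^{n-k}=0,\qquad k=0,\dots,n.
\]

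Second, I would feed the extreme cases $k=0$ and $k=n$ into the telescoping identity $(dd^cu_0)^n-(dd^cu_1)^n=-\,dd^c\phi\wedge\sum_{k=0}^{n-1}(dd^cu_0)^k\wedge(dd^cu_1)^{n-1-k}$. Pairing with $u_0-u_1=-\phi$ and integrating by parts (which follows from (\ref{ibp}) by bilinearity, the boundary terms vanishing since $u_0,u_1\in\E_0(D)$),
\[
0=\int_D(u_0-u_1)\big[(dd^cu_0)^n-(dd^cu_1)^n\big]=-\int_D d\phi\wedge d^c\phi\wedge\sum_{k=0}^{n-1}(dd^cu_0)^k\wedge(dd^cu_1)^{n-1-k}.
\]
Each summand $\int_D d\phi\wedge d^c\phi\wedge(dd^cu_0)^k\wedge(dd^cu_1)^{n-1-k}$ is nonnegative, so each vanishes; equivalently $\int_D d\phi\wedge d^c\phi\wedge(dd^c(u_0+u_1))^{n-1}=0$. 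This is precisely the assertion that $t\mapsto\BE((1-t)u_0+tu_1)$ has vanishing second derivative, in agreement with the computation in Proposition~\ref{conc} and with the fact that (\ref{balance}) already makes its first derivative $(n+1)\int_D(u_1-u_0)(dd^cU_t)^n$ vanish identically.

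The decisive and, I expect, genuinely hard step is to deduce $\phi\equiv0$ from these weighted energy identities. Setting $w=u_0+u_1\in\E_0(D)$, the relations $dd^c(w+\phi)=2\,dd^cu_1\ge0$ and $dd^c(w-\phi)=2\,dd^cu_0\ge0$ give $-dd^cw\le dd^c\phi\le dd^cw$, so $\phi$ is controlled by $w$, has zero boundary values, and satisfies $\int_D d\phi\wedge d^c\phi\wedge(dd^cw)^{n-1}=0$. If $w$ were smooth and strictly plurisubharmonic, the weight $(dd^cw)^{n-1}$ would be positive definite and the integral would dominate $c\int_D|\nabla\phi|^2$, giving $\phi\equiv0$ immediately; the real obstacle is that for $u_0,u_1\in\E_0(D)$ the current $(dd^cw)^{n-1}$ may degenerate on large sets. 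I would overcome this by a regularization/stability argument: approximate $u_0,u_1$ by decreasing sequences of smooth $\E_0$-functions as in the proof of Theorem~\ref{geodrel}, and combine a Cauchy--Schwarz inequality for the mixed forms with a weighted Poincar\'e estimate to upgrade the weighted identity to $\phi\equiv0$. An alternative, which trades the Poincar\'e inequality for a contact-set analysis, is to establish $\BE(\max(u_0,u_1))=\BE(u_1)$ and then invoke the rigidity half of Corollary~\ref{dompr} (note $\E_0(D)\subset\F_1(D)$) with $u_1\le\max(u_0,u_1)$, and symmetrically with $u_0$; here the delicate point is controlling the Monge-Amp\`ere mass on $\{u_0=u_1\}$.
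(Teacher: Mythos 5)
Your preparatory steps are correct and coincide exactly with the paper's: the symmetrization of (\ref{balance}) via (\ref{ibp}) to get $\int_D(u_0-u_1)(dd^cu_0)^k\wedge(dd^cu_1)^{n-k}=0$ for all $k$, and the consequence that all gradient energies $\int_D d\phi\wedge d^c\phi\wedge(dd^cu_0)^k\wedge(dd^cu_1)^{n-1-k}$ vanish, i.e.\ that $t\mapsto\BE(U_t)$ is constant along the segment $U_t=(1-t)u_0+tu_1$. The gap is exactly where you locate it, and neither of your proposed completions closes it. A ``weighted Poincar\'e estimate'' with weight $(dd^c(u_0+u_1))^{n-1}$ cannot exist for $n\ge2$: this weight may vanish identically on open subsets of $D$, and there the hypothesis carries no local information about $\phi$ at all. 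Concretely, in the unit ball take $u_0=\max(|z|^2-1,-1/2)$ and $u_1=\max(|z|^2-1,-1/2+\epsilon\,\mathrm{Re}\,z_1)$ with small $\epsilon>0$: both lie in $\E_0$, and near the origin both are pluriharmonic, so every density $d\phi\wedge d^c\phi\wedge(dd^cu_0)^k\wedge(dd^cu_1)^{n-1-k}$ vanishes identically on a neighborhood of the origin while $\phi=\epsilon\,\mathrm{Re}\,z_1$ is non-constant there. (These $u_j$ do not satisfy (\ref{balance}) globally --- the densities are nonzero in the transition region --- but that is the point: the constraint is purely global, and invisible on the degeneracy set.) Hence no local estimate, and no regularization preserving the degeneracy, can upgrade the identity to $\phi\equiv0$; only in dimension $n=1$, where the weight is absent, does your argument go through directly. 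Your alternative route via $\BE(\max(u_0,u_1))=\BE(u_1)$ is likewise not derived from the hypothesis, as you acknowledge.

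The missing idea in the paper is to compare the affine path with the geodesic, which supplies precisely the global input your estimate lacks. Since $\BE(u_0)=\BE(u_1)$ (case $k=n$) and $\BE(U_t)$ is constant, $\BE(U_t)\equiv\BE(u_0)$. The geodesic $u_t$ of $u_0,u_1$ satisfies $u_t\le U_t$ by Proposition~\ref{relextbounds}(iii), and $\BE(u_t)$ is linear by Theorem~\ref{geodrel}, hence also $\equiv\BE(u_0)$. Corollary~\ref{dompr} --- the rigidity you wanted to invoke, but applied to the pair $u_t\le U_t$ rather than to $\max(u_0,u_1)$ --- then gives $u_t=U_t$ for all $t$. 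Consequently $\widehat U(z,\zeta)=(1-\log|\zeta|)u_0(z)+\log|\zeta|\,u_1(z)$ is plurisubharmonic on $D\times S$; its Levi form has vanishing $\zeta\bar\zeta$-entry, so positivity forces the mixed entries, which are multiples of $\partial(u_1-u_0)/\partial\bar z_k$, to vanish. Thus $u_1-u_0$ is a real-valued function with $\bar\partial(u_1-u_0)=0$, hence constant, and it has zero boundary values, so $u_0=u_1$. Everything up to your ``decisive step'' is the paper's proof verbatim; the step itself requires this geodesic comparison (or some equally global substitute), not an energy estimate.
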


\begin{proof} By (\ref{ibp}), condition (\ref{balance}) implies
$$ \int_D u_1 (dd^c u_0)^k\wedge (dd^c u_1)^{n-k} =\BE(u_1),\quad k=0,\ldots,n,$$
as well, so
\beq\label{Ut0} \int_D (u_1-u_0) (dd^c u_0)^k\wedge (dd^c u_1)^{n-k} =0,\quad k=0,\ldots,n.\eeq
Denote $U_t=(1-t)u_0+tu_1$. By (\ref{Ut0}) and a computation in the proof of Proposition~\ref{conc}, the function $\BE(U_t)$ is linear on
$[0,1]$, so $\BE(U_t)= \BE(u_0)$.

On the other hand, by Proposition~\ref{relextbounds}, the geodesic $u_t$ of $u_0$ and $u_1$ satisfies $u_t\le U_t$ and, by Theorem~\ref{geodrel}, $\BE(u_t)=\BE(u_0)$ as well. By Corollary~\ref{dompr}, we get $u_t=U_t$ for any $t$.

Therefore, the function $\widehat U(z,\zeta) =(1-\log|\zeta|)\, u_0(z)+\log|\zeta|\, u_1(z)$ is plurisubharmonic in $D\times S$. Then
$$\frac\partial{\partial\bar z_k}(u_1-u_0)=0$$
for all $k$, so $u_1-u_0$ is analytic in  $D$, equal to $0$ on $\partial D$, and thus is identical $0$.
\end{proof}

\bigskip

\noindent{\bf Remark.} If $u\in\E_0(D)$ and $u_j=\max\{u,-\alpha_j\}$, then we have
$$ \int_D  (dd^c u_0)^k\wedge (dd^c u_1)^{n-k} =\int_D  (dd^c u_1)^n,\quad k=0,\ldots,n,$$
for any $\alpha_0,\alpha_1>0$. Therefore, using the mixed energy functionals in Theorem~\ref{uniq} is essential.

%%%%%%%%%%%

\section{Example: geodesics of relative extremal functions}

Here we consider a particular case of the construction above.
Recall that the {\it relative extremal function} of a set $K\Subset D$ is
$$\omega_{K}^{}(z)=\limsup_{x\to z}\,\sup\{u(x):\: u\in\PSH^-(D),\ u|_K\le -1\}\in\E_0(D).$$

We will be interested in the following: {\sl Given two relatively compact subsets $K_0$ and $K_1$ of $D$, let $u_j=\omega_{K_j}$ for $j=1,2$, what can be said about their geodesic $u_t$? In particular, is $u_t$ for any fixed $t$ a relative extremal function on $D$ and if not, how far is it from being such?}

\medskip
Note that
\beq\label{Ecap} \BE(\omega_{K})=\int_D \omega_K (dd^c\omega_K)^n=-\int_D  (dd^c\omega_K)^n=-\Capa(K),\eeq
the Monge-Amp\`ere capacity of $K$ with respect to $D$.
We have, by Theorem~\ref{geodrel}, the following

\begin{proposition}\label{linomega} If $u_t$ is the geodesic for a pair of relative extremal functions $\omega_{K_j}$, then
$$\BE(u_t)=(t-1)\,\Capa(K_0) -t\,\Capa(K_1).$$
\end{proposition}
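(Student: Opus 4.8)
The plan is to read this off as an immediate consequence of Theorem~\ref{geodrel} together with the identity (\ref{Ecap}). First I would emphasize that the statement concerns the \emph{geodesic} $u_t$ of $\omega_{K_0}$ and $\omega_{K_1}$, not merely a subgeodesic. Since $\omega_{K_j}\in\E_0(D)$, Theorem~\ref{geodrel} applies and tells us that $t\mapsto\BE(u_t)$ is convex and, moreover, that it is linear precisely because $u_t$ is a genuine geodesic (the relevant direction of the equivalence). Thus $\BE(u_t)$ is an affine function of $t$ on $[0,1]$, and it suffices to pin down its two endpoint values.

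Next I would compute those endpoint values. By Proposition~\ref{relextbounds}(ii), $u_t\to\omega_{K_j}$ uniformly on $D$ as $t\to j$, whence $\BE(u_t)\to\BE(\omega_{K_j})$; applying (\ref{Ecap}) gives $\BE(\omega_{K_0})=-\Capa(K_0)$ and $\BE(\omega_{K_1})=-\Capa(K_1)$. The unique affine function of $t$ attaining these two boundary values is
$$\BE(u_t)=(1-t)\,\bigl(-\Capa(K_0)\bigr)+t\,\bigl(-\Capa(K_1)\bigr)=(t-1)\,\Capa(K_0)-t\,\Capa(K_1),$$
which is exactly the claimed identity.

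There is essentially no hard step here: the substance is carried entirely by Theorem~\ref{geodrel}, and what remains is bookkeeping with the endpoint values. The only point deserving a word of care is the continuity $\BE(u_t)\to\BE(\omega_{K_j})$ as $t\to j$, i.e.\ that the affine function $\BE(u_t)$ genuinely attains the endpoint energies $-\Capa(K_j)$. This is guaranteed by the uniform convergence in Proposition~\ref{relextbounds}(ii) combined with the continuity of the energy functional recorded after (\ref{Mab1}) (via \cite[Thm. 3.8]{Ce03}); alternatively, once linearity is known, $\BE(u_t)$ extends continuously to the closed interval $[0,1]$, so matching its limits at $t=0,1$ with the capacities is automatic.
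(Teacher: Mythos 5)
Your proof is correct and takes essentially the same route as the paper, which presents the proposition as an immediate consequence of Theorem~\ref{geodrel} (linearity of $\BE$ along geodesics) combined with the identity (\ref{Ecap}) giving $\BE(\omega_{K_j})=-\Capa(K_j)$. Your explicit justification of the endpoint continuity via Proposition~\ref{relextbounds}(ii) simply spells out a step the paper leaves implicit.
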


\medskip
Denote
$L_t=\{z\in D:\: u_t(z)=-1\}$,
then we have
$ u_t\le \omega_{L_t}$. By Corollary~\ref{dompr} and Proposition~\ref{linomega}, this implies

\begin{proposition}\label{linomega1}
In the conditions of Proposition~\ref{linomega},
$$\Capa(L_t)\le (1-t)\,\Capa(K_0)+t\,\Capa(K_1),$$
 and the inequality becomes equality if and only if $\omega_{L_t}$ is the geodesic.
 \end{proposition}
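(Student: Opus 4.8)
The plan is to combine the monotonicity of the energy functional from Corollary~\ref{dompr} with the explicit value of $\BE(u_t)$ already computed in Proposition~\ref{linomega} and with the capacity interpretation of the energy of a relative extremal function recorded in~(\ref{Ecap}). The whole argument hinges on the single domination inequality $u_t\le\omega_{L_t}$, which is noted just before the statement: it holds because $u_t\in\PSH^-(D)$ and $u_t=-1$ on $L_t$, so $u_t$ is an admissible competitor in the extremal problem defining $\omega_{L_t}$.

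First I would record that the geodesic $u_t$ lies in $\E_0(D)$: by Proposition~\ref{relextbounds} it is bounded (squeezed between the bounded functions $s_t$ and $U_t$) and has zero boundary values, hence $u_t\in\E_0(D)\subset\F_1(D)$. This membership is what will make the uniqueness part of Corollary~\ref{dompr} available in the equality case. Feeding $u_t\le\omega_{L_t}$ into the first statement of Corollary~\ref{dompr} gives $\BE(u_t)\le\BE(\omega_{L_t})$.

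Next I substitute the two sides. By~(\ref{Ecap}) we have $\BE(\omega_{L_t})=-\Capa(L_t)$, while Proposition~\ref{linomega} gives $\BE(u_t)=(t-1)\,\Capa(K_0)-t\,\Capa(K_1)=-(1-t)\,\Capa(K_0)-t\,\Capa(K_1)$. Thus the energy inequality reads $-(1-t)\,\Capa(K_0)-t\,\Capa(K_1)\le-\Capa(L_t)$, and multiplying through by $-1$ yields exactly the asserted bound $\Capa(L_t)\le(1-t)\,\Capa(K_0)+t\,\Capa(K_1)$.

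Finally, for the equality clause I would observe that equality in the capacity inequality is, by the same substitution, equivalent to $\BE(u_t)=\BE(\omega_{L_t})$. Since $u_t\le\omega_{L_t}$ and $u_t\in\F_1(D)$, the second statement of Corollary~\ref{dompr} then forces $u_t=\omega_{L_t}$, i.e. the geodesic coincides with the relative extremal function of $L_t$; conversely, if $\omega_{L_t}$ is the geodesic the two energies agree trivially and equality holds. I expect the only genuinely delicate point to be this equality clause: one must confirm the regularity hypothesis of Corollary~\ref{dompr} is met (which is why establishing $u_t\in\F_1(D)$ early matters) and interpret the phrase \emph{$\omega_{L_t}$ is the geodesic} as the pointwise identity $\omega_{L_t}=u_t$. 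The forward inequality itself is essentially immediate once energy monotonicity and the two explicit energy values are in hand.
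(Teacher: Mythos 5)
Your proof is correct and follows essentially the same route as the paper's: the domination $u_t\le\omega_{L_t}$, the monotonicity and uniqueness statements of Corollary~\ref{dompr}, the linear energy value from Proposition~\ref{linomega}, and the identity $\BE(\omega_{L_t})=-\Capa(L_t)$ from (\ref{Ecap}). The paper's proof is just a one-line citation of these same ingredients, so your write-up is simply a fleshed-out version of it.
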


%\begin{proof}
%This follows from (\ref{Lt}), Corollary~\ref{dompr} and Proposition~\ref{linomega}.
%\end{proof}

\medskip

Now let us assume $D$ to be a bounded complete logarithmically convex Reinhardt domain of $\Cn$, that is, $y\in D$ provided $z\in D$ and $|y_l|\le |z_l|$ for all $l$, and such that the set $\log D=\{s\in\Rn_-:\: \Exp s\in D_j\}$ is a convex subset of $\Rn$; here $\Exp s= (e^{s_1},\ldots, e^{s_n})$. In addition, let $K_j$, $j=0,1$, be compact Reinhardt subsets of $D$.
In this setting, the functions $\omega_{K_j}$ are toric (multi-circled) and so, the function
$$\check u(s,t):=u_t(\Exp s)$$
is convex in $(s,t)\in \Rn_-\times (0,1)$.
Denote
\beq\label{kt}K_t=K_0^{1-t}K_1^t=\{z\in\D^n: |z_l|=|\eta_l|^{1-t} |\xi_l|^{t}, \ 1\le l\le n,\ \eta\in K_0,\ \xi\in K_1\}, \quad 0<t<1;\eeq
in other words,  $\log K_t=(1-t)\log K_0+t\log K_1$. Note that $K_t\subset D$ because $\log D$ is convex.

Recall that volumes $|\cdot|$ of convex combinations $(1-t)P_0+t\,P_1$ of two bodies $P_j\subset \Rn$  satisfy
$$ |(1-t)P_0+t\,P_1|\ge |P_0|^{1-t}\,|P_1|^t,$$
the Brunn-Minkowski inequality (in multiplicative form).
In our case, the sets $\log K_j$ typically are of infinite volume. Instead of the volumes, we have
a {\sl reversed Brunn-Minkowski inequality for the capacities} of $K_t$ (multiplicative combinations of $K_j$), in additive form.

\begin{theorem} In the Reinhardt situation, the capacities of the sets $K_t$ defined by (\ref{kt}) satisfy
$$\Capa(K_t)\le (1-t)\,\Capa(K_0)+t\,\Capa(K_1).$$
\end{theorem}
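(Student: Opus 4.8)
The plan is to bound $\Capa(K_t)$ by comparing the geodesic $u_t$ of $\omega_{K_0}$ and $\omega_{K_1}$ with the relative extremal function $\omega_{K_t}$ of the multiplicative combination, and then to feed this comparison into the energy identities already at our disposal. Concretely, I will show that $u_t\le\omega_{K_t}$ on $D$; granting this, Corollary~\ref{dompr} yields $\BE(u_t)\le\BE(\omega_{K_t})$, while Proposition~\ref{linomega} gives the exact value $\BE(u_t)=(t-1)\,\Capa(K_0)-t\,\Capa(K_1)$ and (\ref{Ecap}) gives $\BE(\omega_{K_t})=-\Capa(K_t)$. Substituting and rearranging the resulting inequality $-\Capa(K_t)\ge(t-1)\,\Capa(K_0)-t\,\Capa(K_1)$ produces exactly the asserted bound $\Capa(K_t)\le(1-t)\,\Capa(K_0)+t\,\Capa(K_1)$.

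So everything reduces to the comparison $u_t\le\omega_{K_t}$, and the heart of the matter is to prove that $u_t\le -1$ on $K_t$. Here I would exploit the toric nature of the whole construction: since $K_0,K_1$ are Reinhardt, the functions $\omega_{K_j}$ are multi-circled, the Perron envelope defining the geodesic is invariant under the $(n+1)$-dimensional torus action on $(z,\zeta)$, and hence $\check u(s,t)=u_t(\Exp s)$ is a convex function of $(s,t)\in\Rn_-\times(0,1)$, as recorded just before the statement. Because $\omega_{K_j}\le -1$ on $K_j$, we have $\check u(s_0,0)\le -1$ for $s_0\in\log K_0$ and $\check u(s_1,1)\le -1$ for $s_1\in\log K_1$. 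Writing a point of $\log K_t$ as $s_t=(1-t)\,s_0+t\,s_1$ with $s_0\in\log K_0$ and $s_1\in\log K_1$, joint convexity gives
$$\check u(s_t,t)\le(1-t)\,\check u(s_0,0)+t\,\check u(s_1,1)\le -1,$$
so $u_t\le -1$ on $K_t$. Since $u_t\in\PSH^-(D)$, it is then a competitor in the definition of $\omega_{K_t}$, whence $u_t\le\omega_{K_t}$, closing the loop.

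I expect the only genuinely delicate point to be the passage from the pointwise bound on $K_t$ to the domination $u_t\le\omega_{K_t}$ at the level of regularized extremal functions: the inequality $\omega_{K_j}\le -1$ on $K_j$ is only guaranteed quasi-everywhere, so $u_t\le -1$ on $K_t$ should be read modulo a pluripolar (hence negligible) exceptional set, and one invokes the standard fact that such sets affect neither the relative extremal function nor the capacity. One should also confirm that the convexity endpoints really are $\omega_{K_0},\omega_{K_1}$, which follows from the uniform convergence $u_t\to\omega_{K_j}$ in Proposition~\ref{relextbounds}(ii). The remaining ingredients—joint convexity of $\check u$, the energy formula of Proposition~\ref{linomega}, the monotonicity in Corollary~\ref{dompr}, and the identity (\ref{Ecap})—are already in place, so once this measure-theoretic care is taken the argument is immediate.
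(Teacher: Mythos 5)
Your proof is correct and takes essentially the same route as the paper: the paper also deduces from the joint convexity of $\check u$ that $\check u\le -1$ on $\log K_t$, concludes $K_t\subset L_t$, and then cites Proposition~\ref{linomega1}, whose own proof is precisely your energy comparison (Corollary~\ref{dompr} together with Proposition~\ref{linomega} and (\ref{Ecap})) applied to $\omega_{L_t}$ in place of $\omega_{K_t}$. Your version simply inlines that proposition by comparing $u_t$ with $\omega_{K_t}$ directly, and your explicit handling of the endpoint limits and the quasi-everywhere exceptional sets is, if anything, more careful than the paper's two-line argument.
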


\begin{proof}
By the convexity of $\check u$, we have $\check u(s,t)\le -1$ when $s\in (1-t)\log K_0+t\log K_1= \log K_t$.
Therefore, $K_t\subset L_t$, and the result follows from Proposition~\ref{linomega1}.
\end{proof}

\medskip

 Evidently, $\omega_{K_t}^{}$ is the geodesic if and only if $\check\omega_{K_t}(s)$ is convex in $(s,t)$. It turns out that the latter need not be true.

\begin{example}\label{n1ex} { Let $n=1$, $D=\D$, $K_0=\{z:\:|z|\le e^{-1}\} $ and $K_1=\{z:\:|z|\le e^{-2}\} $. Then $K_t=\{z:\:|z|\le e^{-1-t}\}$ and the function
$$\check\omega_{K_t}(s)=\max\left\{\frac{s}{1+t},-1\right\}$$
is not convex in $(s,t)$, so $\omega_{K_t}^{}$ is not geodesic.}
It is easy to check that
$$ \check u(s,t)=\max\left\{s, \frac{s+t-1}2,-1\right\},$$
so $K_t=L_t$ and $u_t$ is not a relative extremal function at all.

Note also that
$\BE(\omega_{K_t})=-\Capa(K_t)=-(1+t)^{-1}$ is far from being linear. Finally, $\BE(u_t)=t/2-1$, as expected.
\end{example}

In this example, the geodesics $u_t$ still pertain some features of relative extremal functions.  Namely, recall that a {\it pluriregular condenser} $(K_1,...,K_m, \sigma_1,...,\sigma_m)$ is a system of pluriregular
compact sets $K_m \subset K_{m-1} \subset\ldots\subset K_1 \subset D \subset \overline D =K_0$
and numbers $\sigma_m<\sigma_{m-1}<\ldots<\sigma_1<\sigma_0=0$ such that there is a continuous
plurisubharmonic function $\omega$ on $D$ with zero boundary values,
$K_i = \{z\in D:\: \omega\le \sigma_i\}$ and $\omega$ is maximal on the complement of $K_i$ in the interior of $K_{i-1}$, see \cite{Po}.
In our case, $u_t$ is the extremal function for the condenser
$(K_{1,t},K_{2,t},t-1, -1) $, where $K_{1,t}= \{z:\:|z|\le e^{1-t}\}$ and $K_{2,t}= \{z:\:|z|\le e^{-1-t}\}$, and $\BE(u_t)$ is the energy of the condenser.

It would be nice to know if anything similar holds in the general case of geodesics of relative extremal functions.

\section{Geodesics on $\F_1$}

One cannot apply the above construction to functions from $\F_1(D)$ directly, because they need not be bounded from below and thus existence of the 'good' envelope $\widehat v$ is not guaranteed (in the next section, we will show that generally there are no geodesics for plurisubharmonic functions with nonzero Lelong numbers).

Let $u_j\in\F_1(D)$, $j=0,1$ and let $u_{j,N}\in\E_0(D)$ decrease to $u_j$ as $N\to\infty$. Then their geodesics $u_{t,N}\in\E_0(D)$ linearize the functional $\BE$:
$$\BE(u_{t,N})=(1-t)\,\BE(u_{0,N})+t\,\BE(u_{1,N}).$$
Since $u_{t,N}\ge u_{1}+u_{2}\in\F_1(D)$ for any $N$, the functions $u_{t,N}$ decrease to functions $v_t\in\F_1(D)$  and $\BE(u_{t,N})$ decrease to $\BE(v_{t})$ for $0<t<1$ while $\BE(u_{j,N})$ decrease to $\BE(u_{j})$ for $j=0,1$ by \cite[Thm. 3.8]{Ce03}.
Therefore,
\beq\label{eqvt} \BE(v_{t})=(1-t)\,\BE(u_{0})+t\,\BE(u_{1}).\eeq
Nor also that since $\widehat u_N(z,\zeta)=u_{\log|\zeta|,N}(z)$ satisfy $(dd^c\widehat u_n)^{n+1}=0$ on $D\times S$ and decrease to $\widehat v(z,\zeta)$, we have $(dd^c\widehat v)^{n+1}=0$ as well.

To have a complete analogy with the bounded case, we need to establish the relations $v_t\to u_j$ as $t\to j$ for $j=0$ and $1$. Since $v_t$ are convex in $t$ and $v_t\ge u_1+u_2$, the functions $v_j=\limsup_{t\to j}v_t$ are weak limits of $v_t$ and belong to $\F_1(D)$. By construction, $v_j\le u_j$.

Denote $V_t= (1-t)\,v_0+t\,v_1$. Then a direct computation shows
$\BE(V_t)\to \BE(v_j)$ as $t\to j$. Since $v_t\le V_t$, we get
$$\BE(u_j)=\lim_{t\to j} \BE(v_t)\le \lim_{t\to j} \BE(V_t)=\BE(v_j),$$
which implies $u_j=v_j$ by Corollary~\ref{dompr}.

So, from now on we rename the functions $v_t$ to $u_t$ since they have $u_j$ as there endpoints, for the moment as upper limits when $t\to j$.
We claim that actually $u_t\to u_j$ in capacity, that is, for any $\epsilon>0$, we have $\Capa A_{\epsilon,t}\to 0$, where
$A_{\epsilon,t}=A_{\epsilon,t,j}=\{z\in D:\: |u_t(z)-u_j(z)|>\epsilon\}$. 

We will prove the claim for $j=0$, the case $j=1$ being completely similar.
By subadditivity of the capacity, it suffices to show that $\Capa A_{\epsilon,t}^{\pm}\to 0$, where $A_{\epsilon,t}^+=\{z:\: u_t(z)-u_0(z)>\epsilon\}$ and $A_{\epsilon,t}^-=\{z:\: u_t(z)-u_0(z)<-\epsilon\}$. Moreover, since $u_t\ge u_0+u_1$ and $\Capa\{z:\: u_0(z)+u_1(z)<-N\}=o(N^{-n})$ as $N\to\infty$ \cite[Lemma 2.1]{Ben}, we can assume $u_0+u_1\ge -N$ on $A_{\epsilon,t}$.

Since $u_t\le V_t$, we have $u_1-u_0\ge \epsilon/t$ on $A_{\epsilon,t}^+$, so for any $\psi\in\PSH(D)$,$-1\le\psi\le 0$,
$$\int_{A_{\epsilon,t}^+} (dd^c\psi)^n\le t\,\epsilon^{-1}\int_{A_{\epsilon,t}^+} (u_1-u_0) (dd^c\psi)^n\le
tN\epsilon^{-1}\int_{A_{\epsilon,1/2}^+}  (dd^c\psi)^n\le tN\epsilon^{-1}\Capa A_{\epsilon,1/2}^+$$
for any $t<1/2$, which implies $\Capa A_{\epsilon,t}^+\to 0$ as $t\to 0$.

To work with the set $A_{\epsilon,t}^-$ is more tricky because, in the unbounded case, there are no straightforward subgeodesics with good behavior at the endpoints. We will use here an envelope technique introduced (in the K\"{a}hler setting) in \cite{RWN} and developed in \cite{Da14a} (especially in Theorems~4.3 and 5.2 of the latter paper).

Given $u,v\in \PSH(D)$, denote the largest plurisubharmonic minorant of $\min\{u,v\}$ in $D$ by $P(u,v)$. If $u_0,u_1\in\F_1(D)$ and $C\ge 0$, then $$u_0+u_1\le p_C:= P(u_0,u_1+C)\le u_0,$$ which implies $p_C\in\F_1(D)$. Therefore,
$ w_{t,C}=p_C-Ct\le u_t$ and
$$A_{\epsilon,t}^-\subset \{z:\: w_{t,C}(z)-u_0(z)<-\epsilon\}.$$
Therefore,
$$\lim_{t\to 0}\Capa A_{\epsilon,t}^-\le \inf_{C\ge 0}\Capa B_{\epsilon,C}^-,$$
where $B_{\epsilon,C}^-=\{z:\: p_C(z)-u_0(z)<-\epsilon\}$.

The family $p_C $ increases in $C$ to a function whose upper semicontinuous regularization is a plurisubharmonic function $U$. Moreover, $p_C$ converges to $U$ in capacity, so our claim follows from the lemma below.

\begin{lemma} For any $u_0,u_1\in\F_1(D)$, let $p_C:= P(u_0,u_1+C)$ and let $U$ be the upper semicontinuous regularization of $\lim_{C\to\infty}p_C$. Then $U=u_0$ on $D$.
\end{lemma}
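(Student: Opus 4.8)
The plan is to show that $U = u_0$ by establishing the two inequalities $U \le u_0$ and $U \ge u_0$ and then invoking that $U$ is the regularization of a genuine plurisubharmonic candidate. The inequality $U \le u_0$ is immediate from the construction: each $p_C = P(u_0, u_1+C)$ is a plurisubharmonic minorant of $\min\{u_0, u_1+C\} \le u_0$, so $p_C \le u_0$ for every $C$, hence the increasing limit and its regularization satisfy $U \le u_0$ as well (here one uses that $u_0$ is itself plurisubharmonic, so the regularization does not push the limit above $u_0$). Thus the entire content of the lemma is the reverse inequality $U \ge u_0$, or equivalently that $U$ cannot stay strictly below $u_0$ on a set of positive capacity.

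My approach to $U \ge u_0$ would be a Monge--Amp\`ere mass-balance argument modelled on the envelope theory of \cite{RWN} and \cite{Da14a} referenced just above the lemma, combined with the domination principle available to us through Corollary~\ref{dompr}. The key structural fact about $p_C = P(u_0, u_1+C)$ is that the Monge--Amp\`ere measure $(dd^c p_C)^n$ is carried by the contact set where $p_C$ actually touches one of its two obstacles, i.e.\ it vanishes on the \emph{open} set $\{p_C < \min(u_0, u_1 + C)\}$ where $p_C$ is maximal. On the region $\{u_1 + C < u_0\}$ the relevant obstacle is $u_1 + C$, which recedes to $+\infty$ as $C \to \infty$; so in the limit the contact with $u_1+C$ disappears pointwise, and all the mass of $(dd^c U)^n$ should be forced onto the set $\{U = u_0\}$. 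First I would make this precise by showing $(dd^c U)^n$ is concentrated on $\{U = u_0\}$, using that $p_C \to U$ in capacity (already noted in the text, via the quasicontinuity and the convergence theorems for Monge--Amp\`ere operators along monotone limits in $\F_1$).

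Once the mass is located on $\{U = u_0\}$, I would integrate: since $U \le u_0$ with equality where $(dd^c U)^n$ lives, one gets
$$\int_D U\,(dd^c U)^n = \int_{\{U=u_0\}} u_0\,(dd^c U)^n.$$
The aim is to turn this into the energy equality $\BE(U) = \BE(u_0)$, after which Corollary~\ref{dompr} (applicable because $u_0 \in \F_1(D)$, and $U \in \F_1(D)$ as it is sandwiched between $u_0 + u_1$ and $u_0$) closes the argument immediately, giving $U = u_0$. To reach $\BE(U) = \BE(u_0)$ I would compare $\BE(U)$ and $\BE(u_0)$ through the mixed-form identity of Proposition~\ref{lem:Mab}, writing $\BE(u_0) - \BE(U) = \int_D (u_0 - U)\sum_{k} (dd^c u_0)^k \wedge (dd^c U)^{n-k}$, and showing each mixed term vanishes because $u_0 - U$ is supported off the carriers of the relevant currents; this is where I would lean on the fact that the mixed Monge--Amp\`ere masses are also controlled by the contact-set structure, not only the top power.

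The main obstacle I anticipate is the justification that the limiting current $(dd^c U)^n$ really is supported on $\{U = u_0\}$, rather than leaking onto the ascending contact sets with $u_1 + C$ in a way that survives the limit. Because the functions here are unbounded (members of $\F_1$, not $\E_0$), I cannot use the plain comparison/contact-set results for bounded plurisubharmonic functions and must invoke their $\F_1$-analogues, controlling mass escape near $\partial D$ and near the poles. The estimate $\Capa\{u_0 + u_1 < -N\} = o(N^{-n})$ cited earlier, together with convergence in capacity of $p_C$ to $U$, is exactly the kind of tool I expect to need to push the contact-set localization through the monotone limit $C \to \infty$. Handling this escape-of-mass carefully, and verifying that the envelope $P(u_0, u_1+C)$ enjoys the orthogonality (contact-set concentration) property in the $\F_1$ setting, is the technical heart of the proof.
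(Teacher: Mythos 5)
Your reduction to proving $U\ge u_0$ is correct, and your general instinct --- analyze the Monge--Amp\`ere mass of the envelope via the Ross--Witt Nystr{\o}m/Darvas technique, working through bounded approximants and monotone limits --- is exactly the direction the paper takes. But your endgame contains a genuine gap. To get $\BE(U)=\BE(u_0)$ from Proposition~\ref{lem:Mab} you need \emph{all} $n+1$ terms $\int_D (u_0-U)\,(dd^c u_0)^k\wedge(dd^c U)^{n-k}$, $k=0,\dots,n$, to vanish (they are each nonnegative since $u_0\ge U$). The contact-set (orthogonality) property of envelopes only controls the term $k=0$: off the contact set $\{U=u_0\}$ the function $U$ is maximal, which kills the top self-intersection $(dd^cU)^n$ there, but \emph{not} the current $dd^cU$ itself, and hence not the mixed currents $(dd^c u_0)^k\wedge(dd^cU)^{n-k}$ with $k\ge 1$. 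Your claim that ``mixed Monge--Amp\`ere masses are also controlled by the contact-set structure'' is false in general: take $u_0$ smooth and strictly plurisubharmonic and $U$ an envelope that is maximal but not pluriharmonic off the contact set; then $dd^c u_0\wedge (dd^cU)^{n-1}$ charges that region. Worse, the term $k=n$ reads $\int_D(u_0-U)(dd^cu_0)^n=0$, i.e.\ $U=u_0$ almost everywhere with respect to $(dd^cu_0)^n$ --- essentially the statement you are trying to prove, and no envelope property delivers it a priori.

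The paper's proof avoids this circularity by never aiming at an energy identity. It applies Darvas's inequality $(dd^cP(u,v))^n\le {\bf 1}_{\{P=u\}}(dd^cu)^n+{\bf 1}_{\{P=v\}}(dd^cv)^n$ to the bounded approximants $u=u_{0,N}$, $v=u_{1,N}+C$ from the definition of $\F_1$, observes that the second term is supported on $\{u_{1,N}<-C\}$ where its total mass against a test function $\eta$ is at most $C^{-1}\max\eta\,|\BE(u_{1,N})|$, and passes to the limits $N\to\infty$ and then $C\to\infty$ to obtain the \emph{measure} inequality $(dd^cU)^n\le (dd^cu_0)^n$. The conclusion $U\ge u_0$ is then immediate from Cegrell's comparison theorem \cite[Thm.~4.5]{Ce03} for functions in $\F_1(D)$ --- no energy functional and no Corollary~\ref{dompr} needed. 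Note also that the measure inequality is both what your argument is missing and strictly stronger than concentration: if you did carry the refined bound $(dd^cU)^n\le{\bf 1}_{\{U=u_0\}}(dd^cu_0)^n$ through the limits, your energy route could be salvaged (then $\BE(U)=\int_{\{U=u_0\}}u_0\,(dd^cU)^n\ge \int_D u_0\,(dd^cu_0)^n=\BE(u_0)$), but at that point Cegrell's theorem already finishes the proof in one line, so the detour through $\BE$ buys nothing.
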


\begin{proof} Our arguments are close to the proof of \cite[Thm. 4.3]{Da14a}. 
Precisely as for $\omega$-plurisubharmonic functions in \cite[Prop.~3.3]{Da14a}, we have for any bounded plurisubharmonic functions $u$ and $v$ the inequality
$$(dd^cP(u,v))^n\le {\bf 1}_{\{P=u\}} (dd^cu)^n+ {\bf 1}_{\{P=v\}}(dd^cv)^n.$$
Let $u_{0,N},\, u_{1,N}\in\E_0(D)$  be approximations of $u_0,u_1\in\F_1(D)$ from the definition of the class $\F_1$, and set $u=u_{0,N}$, $v=u_{1,N}+C$ for $C>0$. Then
$$ {\bf 1}_{\{P=u\}} (dd^cu)^n\le (dd^cu_{0,N})^n$$
and
$$ {\bf 1}_{\{P=v\}}(dd^cv)^n\le {\bf 1}_{\{u_{1,N}<-C\}}(dd^c u_{1,N})^n.$$
For any positive test function $\eta\in C_0(D)$, we have
\begin{eqnarray*} \int_D \eta\, {\bf 1}_{\{u_{1,N}<-C\}}(dd^c u_{1,N})^n &\le &
\frac1C\int_D \eta\, |u_{1,N}|{\bf 1}_{\{u_{1,N}<-C\}}(dd^c u_{1,N})^n \\
&\le &
\frac{\max\eta}{C}|\BE(u_{1,N})| \rightarrow \frac{\max\eta}{C}|\BE(u_{1})|
\end{eqnarray*}
as $N\to\infty$.
Since  $P(u,v)$ decreases to $p_C $ as $N\to \infty$, we get then
$$\int_D \eta\,(dd^cp_C)^n\le  \int_D \eta\,(dd^cu_{0})^n+ \frac{\max\eta}{C}|\BE(u_{1})|,$$
and with $C\to\infty$ we deduce
$(dd^cU)^n\le  (dd^cu_{0})^n$.
Both $U$ and $ u_0$ belong to $\F_1(D)$, so this implies, by \cite[Thm. 4.5]{Ce03}, $U\ge u_0$. Since $U\le u_0$, the proof of the lemma is complete.
\end{proof}

\medskip

We summarize the results of this section as follows.

\begin{theorem}\label{mabE1} For any pair $u_0,u_1\in\F_1(D)$ there exists a geodesic $u_t\subset\F_1(D)$, $0< t< 1$, such that
$u_t$ converge in capacity to $ u_j$ as $t$ approaches $ j=0$ and $j=1$.
The energy functional $v\mapsto\BE(v)$ is concave on $\F_1(D)$, while the function $t\mapsto \BE(u_t)$ is linear on geodesics $u_t$ and convex on subgeodesics $v_t\in\F_1(D)$.
\end{theorem}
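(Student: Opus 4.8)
The plan is to deduce the whole statement from the bounded case of Section~3 by exhaustion: every $u\in\F_1(D)$ is a decreasing limit of functions from $\E_0(D)$, and $\BE$ is continuous along such sequences, so one runs the constructions of Proposition~\ref{conc} and Theorem~\ref{geodrel} at the bounded level and passes to the limit. Existence and linearity come out at once: choosing $u_{j,N}\in\E_0(D)$ decreasing to $u_j$ and taking their $\E_0$-geodesics $u_{t,N}$, Theorem~\ref{geodrel} gives linearity of $\BE(u_{t,N})$; since $u_{t,N}\ge u_0+u_1\in\F_1(D)$ these decrease to some $v_t\in\F_1(D)$ with $(dd^c\widehat v)^{n+1}=0$, and letting $N\to\infty$ yields the identity \ref{eqvt}.

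For the two convexity-type assertions I would truncate or approximate. Concavity of $\BE$ on $\F_1(D)$ follows by applying Proposition~\ref{conc} to the affine interpolants of decreasing $\E_0$-approximants and noting that a pointwise limit of concave functions is concave. For convexity along a subgeodesic $w_t\subset\F_1(D)$, the truncations $\max\{w_t,-N\}\in\E_0(D)$ constitute an $\E_0$-subgeodesic for the endpoints $\max\{u_j,-N\}$, so $t\mapsto\BE(\max\{w_t,-N\})$ is convex by Theorem~\ref{geodrel}, and convexity survives the decreasing limit $N\to\infty$.

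The real obstacle is the endpoint assertion $v_t\to u_j$ in capacity, which I would prove for $j=0$. First the weak limit must be identified: with $v_j=\limsup_{t\to j}v_t$ and $V_t=(1-t)v_0+tv_1$, comparing $\BE(v_t)\le\BE(V_t)$ against the linearity \ref{eqvt} forces $u_0=v_0$ through Corollary~\ref{dompr}. Splitting $A_{\epsilon,t}=\{|v_t-u_0|>\epsilon\}$ by subadditivity of capacity, the upper set $A_{\epsilon,t}^+$ is harmless: from $v_t\le V_t$ one gets $u_1-u_0\ge\epsilon/t$ there, and a Chebyshev-type estimate bounds $\int_{A_{\epsilon,t}^+}(dd^c\psi)^n$ by a multiple of $tN\epsilon^{-1}\Capa A_{\epsilon,1/2}^+$, which tends to $0$.

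The hard part is the lower set $A_{\epsilon,t}^-=\{v_t-u_0<-\epsilon\}$, where the unbounded setting offers no subgeodesic lower barrier. Here I would pass to the envelopes $p_C=P(u_0,u_1+C)\in\F_1(D)$: since $p_C-Ct\le v_t$, the set $A_{\epsilon,t}^-$ embeds into $B_{\epsilon,C}^-=\{p_C-u_0<-\epsilon\}$, and everything reduces to showing that the regularized limit of $p_C$ as $C\to\infty$ equals $u_0$ — the content of the lemma above. Proving that lemma is the technical core: from $(dd^cP(u,v))^n\le {\bf 1}_{\{P=u\}}(dd^cu)^n+{\bf 1}_{\{P=v\}}(dd^cv)^n$ for bounded arguments one controls the mass of $(dd^cp_C)^n$ on $\{u_{1,N}<-C\}$ by a multiple of $C^{-1}|\BE(u_1)|$, lets $N\to\infty$ then $C\to\infty$ to reach $(dd^cU)^n\le(dd^cu_0)^n$, and concludes $U\ge u_0$ by domination of Monge--Amp\`ere masses in $\F_1$; since $U\le u_0$ is automatic, $U=u_0$, giving the required capacity convergence. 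After this identification $v_t$ is renamed the geodesic $u_t$.
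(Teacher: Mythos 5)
Your proposal is correct and follows the paper's own proof essentially step by step: the same decreasing $\E_0$-approximation giving existence and linearity (\ref{eqvt}), the same energy comparison with $V_t$ plus Corollary~\ref{dompr} to identify the endpoints, the same split of $A_{\epsilon,t}$ with the Chebyshev bound on $A_{\epsilon,t}^+$, and the same envelope technique $p_C=P(u_0,u_1+C)$ together with the lemma $U=u_0$ (proved exactly as in the paper, via the Darvas-type estimate on $(dd^cP(u,v))^n$, the $C^{-1}|\BE(u_1)|$ mass bound, and Cegrell's comparison theorem) for $A_{\epsilon,t}^-$. Your explicit approximation and truncation arguments for concavity on $\F_1(D)$ and for convexity along $\F_1$-subgeodesics are sound additions to details the paper leaves implicit.
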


\begin{corollary}\label{uniq1} The uniqueness result of Theorem~\ref{uniq} remains true for $u_0,u_1\in\F_1(D)$. \end{corollary}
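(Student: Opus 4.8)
The plan is to re-run the proof of Theorem~\ref{uniq} essentially verbatim, replacing each tool used there for the class $\E_0(D)$ by its $\F_1(D)$-counterpart that has now been established. Concretely: the integration-by-parts identity (\ref{ibp}) is valid on all of $\E_1(D)\supset\F_1(D)$; Proposition~\ref{lem:Mab} and the second-derivative computation of Proposition~\ref{conc} hold on $\E_1(D)$; the domination principle Corollary~\ref{dompr} applies whenever the smaller function lies in $\F_1(D)$; and Theorem~\ref{mabE1} supplies, for the given pair $u_0,u_1\in\F_1(D)$, a geodesic $u_t\in\F_1(D)$ along which $\BE$ is linear and with $u_t\to u_j$ in capacity as $t\to j$.

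First I would feed the balance conditions (\ref{balance}) through (\ref{ibp}) exactly as before. Since (\ref{ibp}) holds on $\F_1(D)$, the transposed identities $\int_D u_1 (dd^c u_0)^k\wedge (dd^c u_1)^{n-k}=\BE(u_1)$ follow, hence the vanishing relations (\ref{Ut0}). Setting $U_t=(1-t)u_0+tu_1\in\F_1(D)$ and inserting (\ref{Ut0}) into the computation of Proposition~\ref{conc} shows $\frac{d}{dt}\BE(U_t)\equiv 0$, so $\BE(U_t)\equiv\BE(u_0)$; in particular $\BE(u_0)=\BE(u_1)$.

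Next I would compare $U_t$ with the geodesic $u_t$. Because $\BE(u_0)=\BE(u_1)$, linearity of $t\mapsto\BE(u_t)$ from Theorem~\ref{mabE1} forces $\BE(u_t)\equiv\BE(u_0)=\BE(U_t)$. The inequality $u_t\le U_t$ transfers to the $\F_1$ setting by approximation, realizing $u_t$ as the decreasing limit of the $\E_0$-geodesics $u_{t,N}$ with $u_{t,N}\le (1-t)u_{0,N}+tu_{1,N}$ and the right-hand side decreasing to $U_t$. Now Corollary~\ref{dompr}, applicable since $u_t\in\F_1(D)$, yields $u_t=U_t$ for every $t$. Consequently $\widehat U(z,\zeta)=(1-\log|\zeta|)u_0(z)+\log|\zeta|\,u_1(z)$ coincides with the geodesic envelope and is plurisubharmonic on $D\times S$; computing its complex Hessian, the vanishing of the $\zeta\bar\zeta$-entry forces $\partial(u_1-u_0)/\partial\bar z_k=0$ for all $k$, so $u_1-u_0$ is holomorphic, and being real it is a constant.

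The step I expect to need the most care is the last one, deducing that this constant is zero. In the $\E_0$ case one simply invokes the vanishing boundary values, but $\F_1$ functions need not have classical boundary limits, so I would argue instead from the energy. Writing $u_1=u_0+c$ gives $(dd^c u_1)^n=(dd^c u_0)^n$, whence $\BE(u_1)=\BE(u_0)+c\int_D(dd^c u_0)^n$; since $\BE(u_0)=\BE(u_1)$ we get $c\int_D(dd^c u_0)^n=0$. Either $\int_D(dd^c u_0)^n>0$ and $c=0$ directly, or the total mass vanishes and then $u_0\equiv 0\equiv u_1$ by uniqueness for zero Monge--Amp\`ere mass in $\F_1(D)$. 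In all cases $u_0=u_1$, which establishes Corollary~\ref{uniq1}.
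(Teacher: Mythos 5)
Your proposal is correct, and it is essentially the route the paper intends: the paper states Corollary~\ref{uniq1} without proof precisely because, once Theorem~\ref{mabE1} supplies the $\F_1$-geodesic with linear energy and convergence at the endpoints, the proof of Theorem~\ref{uniq} goes through word for word -- integration by parts on $\E_1\supset\F_1$, vanishing of the mixed terms (\ref{Ut0}), constancy of $\BE(U_t)$, the comparison $u_t\le U_t$ (which you correctly justify by passing to the limit in Proposition~\ref{relextbounds}(iii) along the approximating $\E_0$-geodesics), and Corollary~\ref{dompr} to force $u_t=U_t$, hence plurisubharmonicity of $\widehat U$ and holomorphy of $u_1-u_0$.

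The one step you flag -- that ``zero boundary values'' is unavailable for $\F_1$ -- is a genuine gap in a verbatim transcription, and your fix is sound: $u_1-u_0$ is a real holomorphic function, hence a constant $c$; then $(dd^cu_1)^n=(dd^cu_0)^n$ gives $c\int_D(dd^cu_0)^n=0$, and the zero-mass case forces $u_0=u_1=0$ (this last fact follows from Corollary~\ref{dompr} applied with $v=0$, since total mass zero gives $\BE(u_0)=0=\BE(0)$; it would be worth saying so explicitly rather than invoking ``uniqueness for zero Monge--Amp\`ere mass'' as a black box). A slightly shorter ending is available: once $u_1-u_0=c$ is known to be constant, the two functions are comparable ($u_0\le u_1$ or $u_1\le u_0$), and since the $k=n$ balance condition gives $\BE(u_0)=\BE(u_1)$, Corollary~\ref{dompr} applies directly to the pair $(u_0,u_1)$ and yields $u_0=u_1$ without any discussion of masses. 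Either way, your argument is complete.
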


\section{Case of strong singularities}
The Monge-Amp\`ere current $(dd^cu)^n$ of functions from the class $\F_1$ cannot charge pluripolar sets.
If functions $u_j\in\PSH^-(D)$ are allowed to have stronger singularities, the process of constructing geodesics generally fails. The breaking point is that the presumed 'geodesic' $u_t$ can have $\lim_{t\to j}u_t<u_j$.

\medskip

We start with a simple observation. Let $a\in D$ and let $G_a$ be the pluricomplex Green function of $D$ with pole at $a$.

\begin{lemma}\label{lem:dim1} If $\Phi\in \PSH^-(D\times S)$ is such that
$\limsup\Phi(z,\zeta)\le G_a(z)$ for all $ z\in D$ as $|\zeta|\to e$,
then $\Phi(z,\zeta)\le G_a(z)$ for all $z\in D$ and all $\zeta\in S$.
\end{lemma}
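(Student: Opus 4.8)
The plan is to compare $\Phi$ with the lifted Green function $g(z,\zeta):=G_a(z)$ on $D\times S$ by a comparison/domination principle, after reducing to the rotation-invariant case. First I would pass to the upper envelope $\widehat\Phi=(\sup\Phi)^*$ taken over all competitors satisfying the hypotheses; as in the construction of $\widehat u$ above, it is again a competitor, it dominates every such $\Phi$, it is maximal, and by the rotational symmetry in $\zeta$ of the class of competitors it is invariant under $\zeta\mapsto e^{i\theta}\zeta$. Thus $\widehat\Phi(z,\zeta)=\psi(z,\log|\zeta|)$ with $\psi$ jointly plurisubharmonic, in particular psh in $z$ and convex in $s:=\log|\zeta|\in(0,1)$, and it suffices to prove $\widehat\Phi\le g$. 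Note that $g$ involves only the $n$ variables $z$ and is independent of $\zeta$, so $(dd^cg)^{n+1}=0$, i.e. $g$ is maximal on all of $D\times S$.

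The bound is immediate on two of the three boundary pieces: on $\partial D\times S$ one has $\limsup\widehat\Phi\le0=\lim g$, since $\widehat\Phi\le0$ and $G_a\to0$ on $\partial D$, and on $D\times S_1$ the hypothesis gives $\widehat\Phi\le g$. The clean way to exploit this is fibrewise: for each fixed $\zeta$ I would try to show that $z\mapsto\widehat\Phi(z,\zeta)\in\PSH^-(D)$ carries a logarithmic pole $\widehat\Phi(z,\zeta)\le\log|z-a|+O(1)$ at $a$. Since $G_a$ is, by definition, the largest negative plurisubharmonic function on $D$ with such a pole, this would yield $\widehat\Phi(\cdot,\zeta)\le G_a$ for every $\zeta$ and finish the proof.

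The main obstacle is the inner boundary $D\times S_0$, on which no information is assumed: convexity in $s$ together with control only at $s=1$ does not by itself force $\psi\le G_a$ for small $s$, because affine-in-$s$ barriers are not excluded by the $S_1$-bound alone. What does rule them out is the global normalization $\widehat\Phi\le0$ combined with $G_a\to0$ on $\partial D$; any attempt to let $\psi$ detach from $G_a$ near $S_0$ collides with positivity near $\partial D$ (this is exactly what occurs in the model case $D=\D$, $a=0$, $G_0=\log|z|$, where the detached barrier $\log(|z|/|\zeta|)$ is forced to be positive near $\partial D$). Converting this heuristic into the pole bound of the previous paragraph is the heart of the matter, and I expect it to come from the genuinely joint plurisubharmonicity coupling the $z$- and $\zeta$-directions, rather than from convexity in $s$ alone. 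Concretely, I would propagate the pole from $S_1$ inward using upper semicontinuity of the Lelong number of $\widehat\Phi$ along the line $\{a\}\times S$, or, equivalently, run a Phragm\'{e}n--Lindel\"{o}f argument in $\zeta$ for the subharmonic excess $\zeta\mapsto\sup_{z}[\widehat\Phi(z,\zeta)-G_a(z)]$, cut off near the pole $a$ and near $\partial D$ so that it is bounded above and has boundary values $\le0$ on $S_1$ and along $\partial D$. This inner-boundary control is the step I expect to be hardest.
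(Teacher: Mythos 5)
There is a genuine gap, and it sits exactly where you predicted: the inner boundary $D\times S_0$. Your reduction (a fibrewise pole at $a$ plus the extremal property of $G_a$ would imply the lemma) is sound, but neither of the two devices you propose can supply that pole. The Phragm\'{e}n--Lindel\"{o}f argument on the cut-off excess $M_r(\zeta)=\sup_{|z-a|\ge r}\,[\widehat\Phi(z,\zeta)-G_a(z)]$ uses only subharmonicity of $\widehat\Phi(z,\cdot)$ slice by slice, together with $\widehat\Phi\le0$ and the hypothesis on $S_1$. But the function $\Psi(z,\zeta)=\log|\zeta|\, G_a(z)$ is harmonic in $\zeta$ for fixed $z$, psh in $z$ for fixed $\zeta$, satisfies $\Psi\le0$ and $\Psi\to G_a$ as $|\zeta|\to e$, yet $\Psi>G_a$ everywhere in $D\times S$ (it is not jointly psh: for $n=1$, $D=\D$, $a=0$, the complex Hessian of $\log|\zeta|\log|z|$ has zero diagonal and nonzero off-diagonal entries, hence is indefinite). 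So no argument built from those ingredients alone can prove the lemma. Indeed, running your scheme honestly, with $C_r=\sup_{|z-a|\ge r}(-G_a)=\log(1/r)+O(1)$, the two-constants theorem gives $\widehat\Phi\le G_a+C_r\,(1-\log|\zeta|)$ on $\{|z-a|\ge r\}$, i.e.\ a pole of order only $\log|\zeta|$ at $a$, whence $\widehat\Phi(\cdot,\zeta)\le\log|\zeta|\,G_a$; this weaker bound is a fixed point of the iteration, so bootstrapping gains nothing --- $\Psi$ is precisely the wall you hit. The Lelong-number route is circular: the hypothesis is a limsup of values at the boundary, not a Lelong number at any point of $D\times S$ (and $\widehat\Phi$ need not extend across $S_1$); you have no interior point at which $\nu\ge1$ is known to start from, and upper semicontinuity of Lelong numbers propagates lower bounds from interior points to their limit points, not inward from the boundary. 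Two smaller flags: it is not automatic that the regularized upper envelope $\widehat\Phi$ still satisfies the $S_1$-condition, and your boundary estimate for $M_r$ on $S_1$ interchanges $\sup_z$ with $\limsup_\zeta$, which needs a Hartogs-lemma argument.

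The missing idea is one step away from your own observation that $g(z,\zeta)=G_a(z)$ is maximal on $D\times S$: instead of estimating at the inner boundary, repair $g$ there. The paper compares $\Phi$ with the explicit barriers $\psi_N(z,\zeta)=\max\{G_a(z),\,-N\log|\zeta|\}$. Since $-N\log|\zeta|$ is pluriharmonic on $S$, each $\psi_N$ is psh with $(dd^c\psi_N)^{n+1}=0$ on $D\times S$, and its boundary values are $0$ on $\partial D\times S$, equal to $0$ on $D\times S_0$ (where $-N\log|\zeta|\to0$), and equal to $\max\{G_a,-N\}\ge G_a$ on $D\times S_1$. The domination principle for maximal psh functions --- this is exactly where joint plurisubharmonicity of $\Phi$ enters, and why $\Psi$ above does not contradict the lemma itself --- yields $\Phi\le\psi_N$ for every $N$, and letting $N\to\infty$ gives $\Phi(z,\zeta)\le G_a(z)$, since $-N\log|\zeta|\to-\infty$ pointwise on $S$. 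Without some such maximal barrier taking care of $D\times S_0$, the proof cannot be completed along the lines you describe.
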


\begin{proof}
The functions $\psi_N(z,\zeta)= \max\{G_a(z),-N\log|\zeta|\}\in\PSH^-(D\times S)$ are equal to $0$ on $\partial D\times S$.  We also have $\psi_N(z,\zeta)\to u_{N,0}(z)=0$ when $|\zeta|\to 1$, and
$\psi_N(z,\zeta)\to u_{N,1}(z)=\max\{G_a(z),-N\}$ when $|\zeta|\to e$.

Furthermore,  they satisfy $(dd^c \psi_N)^{n+1}=0$ everywhere in $D\times S$. Therefore, $\psi_{N,t}$ is the geodesic for $u_{N,0}$ and $u_{N,1}$. Since $\Phi\le\psi_N$ for any $N$, the proof is complete.
\end{proof}

\medskip

A bit more generally, let $u\in\PSH^-(D)$ be such that $A=\{z:\: u(z)=-\infty\}$ is a closed subset of $D$ and $u\in L_{loc}^\infty(D\setminus A)$.
Then the function
$$g_{u}(z)=\limsup_{x\to z}\,\sup\{v(x):\: v\in\PSH^-(D),\ v\le u+O(1)\}$$
is plurisubharmonic in $D$, locally bounded outside $A$ and satisfying $(dd^cg_u)^n=0$ there. When $A$ is a single point, then $g_u\equiv 0$ if and only if $(dd^c u)^n(A)=0$ \cite{R7}.

As is easy to see, $g_u\not\equiv 0$ if $u$ has nonzero Lelong number at some point of $A$; we do not know if the converse is true.

\medskip

By repeating the arguments of the proof of Lemma~\ref{lem:dim1},
we get

\begin{theorem}\label{minenv}If $\Phi\in \PSH^-(D\times S)$ is such that
\beq\label{bcsing} \limsup_{\log|\zeta|\to j}\Phi(z,\zeta)\le u_j(z)\quad\forall z\in D,\ j=0,1,\eeq
then $\Phi(z,\zeta)\le P(z)$ for all $\zeta\in S$, where $P=P(g_{u_0},g_{u_1})$ is the largest plurisubharmonic minorant of the function $\min_j g_{u_j}$.
In particular, if each $u_j=g_{u_j}$, then the largest $\Phi$ satisfying (\ref{bcsing}) coincides with $P$ (and thus is independent of $\zeta$.)
\end{theorem}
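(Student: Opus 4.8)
The plan is to follow the template of Lemma~\ref{lem:dim1}, replacing the explicit family $\psi_N$ of that lemma by candidate maximal functions built from $g_{u_0}$ and $g_{u_1}$, and then to exploit the domination property $\Phi\le\psi$ for every admissible $\psi$. The key observation is that any $\Phi$ satisfying the boundary condition (\ref{bcsing}) is dominated by any function $\Psi\in\PSH^-(D\times S)$ whose boundary limits at $\log|\zeta|\to j$ dominate those of $\Phi$ and which is maximal (i.e.\ $(dd^c\Psi)^{n+1}=0$) in the interior $D\times S$; this is the comparison principle / gluing argument used implicitly in Lemma~\ref{lem:dim1}, where the role of $\Psi$ was played by $\psi_N$.

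First I would recall from the definition of $g_{u_j}$ that $g_{u_j}\ge u_j+O(1)$ fails in the wrong direction, so I cannot compare $\Phi$ to $g_{u_j}$ at the boundary directly; instead I would truncate. For each $N$ set $u_{j,N}=\max\{g_{u_j},-N\}$ if needed, or work with the candidate envelope
$$
\Psi_N(z,\zeta)=\max\bigl\{P(z),\,(1-\log|\zeta|)(-N)+\log|\zeta|\cdot(-N)\bigr\},
$$
chosen so that $\Psi_N$ is independent of the argument of $\zeta$, is maximal in the interior because $P$ is (recall $(dd^cP)^n=0$ off the singular set and $P=P(g_{u_0},g_{u_1})$ inherits maximality as the plurisubharmonic minorant of two maximal functions), and has the correct dominating boundary values. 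The second step is to verify that the boundary limits of $\Psi_N$ at $\log|\zeta|\to j$ dominate $u_j$: here I would use that $g_{u_j}\ge v$ for every competitor $v\le u_j+O(1)$, so in particular the boundary data $u_j$ are dominated by $g_{u_j}$ up to the truncation, and $P\le g_{u_j}$ by definition of the minorant. Then $\Phi\le\Psi_N$ on $D\times S$ for each $N$ by the maximality/comparison argument, and letting $N\to\infty$ and taking the regularized limit yields $\Phi\le P$.

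The main obstacle I anticipate is the boundary matching: $P$ need not attain the boundary values $u_j$ as $\log|\zeta|\to j$, since $P$ is $\zeta$-independent while $u_0\ne u_1$ in general, so the naive competitor $\Psi=P$ does not itself satisfy (\ref{bcsing}). The resolution is that we only need an upper barrier, not an interpolant: it suffices that the boundary \emph{limits} of the chosen $\Psi_N$ dominate $u_j$, which they do because $P\le g_{u_j}$ and because the truncation term contributes nothing harmful at the respective endpoints. The final clause—that when $u_j=g_{u_j}$ the largest $\Phi$ equals $P$—then follows by noting that $P$ itself, viewed as a $\zeta$-independent function, satisfies $\limsup_{\log|\zeta|\to j}P\le g_{u_j}=u_j$, so $P$ is an admissible competitor in (\ref{bcsing}); combined with the upper bound $\Phi\le P$ just proved, $P$ is the largest such $\Phi$. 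The delicate point to handle carefully is justifying $(dd^cP)^{n+1}=0$ on $D\times S$ for the $\zeta$-independent extension of $P$, which reduces to $(dd^cP)^n=0$ on $D$ off the pluripolar singular set together with a standard argument that the extra $d_\zeta d_\zeta^c$ direction contributes zero for a function independent of $\zeta$.
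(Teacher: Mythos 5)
There is a genuine gap at the heart of your argument: the comparison step. First, your barrier formula degenerates, since $(1-\log|\zeta|)(-N)+\log|\zeta|\cdot(-N)=-N$, so $\Psi_N=\max\{P,-N\}$ is completely independent of $\zeta$; and, more importantly, its limits as $\log|\zeta|\to j$ do \emph{not} dominate the boundary data $u_j$, which is exactly what any maximality/comparison argument requires of an upper barrier. Your justification that they do (``because $P\le g_{u_j}$'') runs the wrong way: $P\le g_{u_j}$ is an upper bound on $P$, whereas you need the lower bound $u_j\le\max\{P,-N\}$, and that is false in general. The paper's own model case, Lemma~\ref{lem:dim1}, already refutes it: for $u_0=0$, $u_1=G_a$ one has $g_{u_0}=0$, $g_{u_1}=G_a$, hence $P=P(0,G_a)=G_a$, and $u_0=0\not\le\max\{G_a,-N\}$ at any point. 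Note also that your boundary verification uses nothing about $P$ beyond $P\le\min_jg_{u_j}$; if the step were sound, the same argument with any plurisubharmonic $Q\le\min_jg_{u_j}$ (say $Q=g_{u_0}+g_{u_1}$) in place of $P$ would yield $\Phi\le Q$ for every admissible $\Phi$, which is absurd. The failure is structural: a $\zeta$-independent barrier whose boundary limits dominate both $u_0$ and $u_1$ must be $\ge\max_ju_j$ everywhere, so it can never decrease to $P$ (in the example above, $\max_ju_j=0$ while $P=G_a$).

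The repetition of Lemma~\ref{lem:dim1} that actually works uses one genuinely $\zeta$-dependent barrier \emph{per endpoint} and brings in $P$ only at the very end, for free. Set
$$\psi^{(1)}_N(z,\zeta)=\max\{g_{u_1}(z),\,-N\log|\zeta|\},\qquad \psi^{(0)}_N(z,\zeta)=\max\{g_{u_0}(z),\,-N(1-\log|\zeta|)\}.$$
Since $u_1\le g_{u_1}$ (because $u_1$ is itself a competitor in the definition of $g_{u_1}$) and $u_0\le0$, the limits of $\psi^{(1)}_N$ as $\log|\zeta|\to1$ and $\log|\zeta|\to0$ are $\max\{g_{u_1},-N\}\ge u_1$ and $\max\{g_{u_1},0\}=0\ge u_0$ respectively, and $\psi^{(1)}_N$ is maximal on $D\times S$ for exactly the same reason as $\psi_N$ in Lemma~\ref{lem:dim1} (a maximum of a $\zeta$-independent function, maximal where locally bounded, with a pluriharmonic function of $\zeta$). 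Hence $\Phi\le\psi^{(1)}_N$ for every $N$, and letting $N\to\infty$ (using $\log|\zeta|>0$ on $S$) gives $\Phi(z,\zeta)\le g_{u_1}(z)$; symmetrically, $\Phi(z,\zeta)\le g_{u_0}(z)$. Finally, for each fixed $\zeta\in S$ the slice $z\mapsto\Phi(z,\zeta)$ is plurisubharmonic and $\le\min_jg_{u_j}$, hence $\le P$ by the very definition of $P$ as the largest plurisubharmonic minorant. This also shows that your worry about $(dd^cP)^{n+1}=0$ is a red herring: $P$ is never used as a barrier, and in any case a function of $z$ alone trivially satisfies $(dd^c)^{n+1}=0$ in $n+1$ variables. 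Your argument for the final clause --- that when $u_j=g_{u_j}$ the function $P$ itself satisfies (\ref{bcsing}) and is therefore the largest admissible $\Phi$ --- is correct once the first part is repaired.
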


\begin{example} Let $A$ be a finite subset of $D$ and let $u_j$ equal the multi-pole Green function of $A$ with weights $m_{j,k}\ge 0$ at $a_k\in A$. Then the best function $\Phi$ satisfying (\ref{bcsing}) is the multi-pole Green function of $A$ with weights $M_k=\max_j m_{j,k}$ at $a_k\in A$.
\end{example}

\medskip\noindent
{\bf Remark.}
 The situation changes if one replaces the segment $0< t< 1$ with the ray $-\infty<t< 0$. For example, let $\vph_j=u_j+w_j$ such that $u_j\in\E_1(D)$ and $w_0=w_1+w$, where $w\in\PSH^-(D)$ has zero boundary values. If $u_t$, $0<t<1$, is the geodesic arc for $u_0$ and $u_1$, then
$$\vph_t=u_{e^t} + w_1+\max\{w, t\}, \quad -\infty<t<0,$$ is a subgeodesic ray with $\vph_t\to\vph_j$ as $t\to \log j$, $j=0, 1$.

\section{Relations to the K\"{a}hler case}

Let $(X,\omega)$ be a compact K\"{a}hler manifold. An upper semicontinuous function $\vph$ on $X$ is called {\it $\omega$-plurisubharmonic} if $\omega+dd^c\vph\ge 0$. Cegrell's classes were generalized to such functions in \cite{GZ07}. A corresponding class $\E_1(X,\omega)$ was introduced, and it has turned to be a natural frame for studying the Mabuchi functional \cite{BBGZ13}; see also a nice presentation in \cite{G15}, where, in addition, toric geodesics on toric manifolds are considered.

Some of problems studied in recent papers by T. Darvas with co-authors (e.g., \cite{BDL15}, \cite{Da14a}, \cite{Da14b}, \cite{DaR}) in the K\"{a}hler setting are close to those treated here. In particular, Proposition~4.2 from \cite{BDL15} is a complete analog of our Corollary~\ref{dompr}. Theorem~5.2 from \cite{Da14a} characterizes $\omega$-plurisubharmonic functions that can be joined by a weak geodesic in terms of a technique from \cite{RWN}, which is closely related to our Theorem~\ref{minenv}. Finally, we have borrowed the idea of using the envelope technique for proving convergence in capacity in Theorem~\ref{mabE1} from Theorems~4.3 and 4.3 of \cite{Da14a}.

\bigskip
{\small {\bf Acknowledgement.} The author is grateful to Tam\'{a}s Darvas for pointing out recent related results in the K\"{a}hler setting and especially the paper \cite{Da14a}.}

\vskip1cm

Tek/Nat, University of Stavanger, 4036 Stavanger, Norway

\vskip0.1cm

{\sc E-mail}: alexander.rashkovskii@uis.no

\end{document}